\newcommand{\N}{{\mathbb N}} 
\newcommand{\C}{{\mathbb C}} 
\newcommand{\D}{{\mathbb D}} 
\newcommand{\R}{{\mathbb R}}
\newcommand{\be}{\begin{equation}} 
\newcommand{\ee}{\end{equation}}
\newcommand{\irn}{\int_{\R^N}} 
\newcommand{\mS}{{\mathcal S}}
\newcommand{\mE}{{\mathcal E}}
\newcommand{\mH}{\mathcal H}
\renewcommand{\theta}{{\vartheta}} 
\numberwithin{equation}{section} 
\newtheorem{theorem}{Theorem}[section] 
\newtheorem{proposition}[theorem]{Proposition} 
\newtheorem{corollary}[theorem]{Corollary} 
\newtheorem{lemma}[theorem]{Lemma} 
\newtheorem{definition}[theorem]{Definition} 
\theoremstyle{definition} 
\newtheorem{remark}[theorem]{Remark} 
\newtheorem{assumption}[theorem]{Assumption}
\newcommand{\brm}{\begin{remark}\rm} 
\newcommand{\erm}{\end{remark}} 
\newcommand{\bte}{\begin{theorem}} 
\newcommand{\ete}{\end{theorem}}
\newcommand{\bas}{\begin{assumption}} 
\newcommand{\eas}{\end{assumption}} 
\newcommand{\bpr}{\begin{proposition}} 
\newcommand{\epr}{\end{proposition}}
\newcommand{\bc}{\begin{corollary}}
\newcommand{\ec}{\end{corollary}} 
\newcommand{\ble}{\begin{lemma}} 
\newcommand{\ele}{\end{lemma}} 
\newcommand{\beq}{\begin{equation}} 
\newcommand{\eeq}{\end{equation}} 
\newcommand{\bdm}{\begin{displaymath}} 
\newcommand{\edm}{\end{displaymath}} 
\numberwithin{equation}{section}
\newcommand\T{\rule{0pt}{3.1ex}}
\newcommand\B{\rule[-1.7ex]{0pt}{0pt}}
\title[On phase segregation in nonlocal two-particle Hartree 
systems]{On phase segregation in nonlocal \\ two-particle Hartree 
systems}
\author[W.H.\ Aschbacher]{Walter H.\ Aschbacher} 
\author[M.\ Squassina]{Marco Squassina} 
\thanks{Zentrum Mathematik, 
Technische Universit\"at M\"unchen,
Boltzmannstrasse 3, 
85747 Garching, Germany. 
E-mail: {\em aschbacher@ma.tum.de}} 
\thanks{Department of Computer Science, 
University of Verona, 
C\`a Vignal 2, Strada Le Grazie 15, 37134 Verona, Italy. 
E-mail: {\em marco.squassina@univr.it}}
\thanks{The research of the second author was partially supported by the 2007 MIUR national research 
project: {\it Variational and Topological Methods in the Study of 
Nonlinear Phenomena}} 
\subjclass[2000]{35Q40, 35J60, 35J50} 
\keywords{Coupled Hartree equations, Quantum many-body problem, Hartree approximation, 
ground states solutions, phase segregation, finite elements, self-consistent iteration}
\begin{document} 
\begin{abstract} 
We prove the phase  segregation phenomenon to occur in the ground state solutions 
of an interacting system of two self-coupled repulsive Hartree equations for 
large nonlinear and nonlocal interactions. A self-consistent numerical 
investigation visualizes the approach to this segregated regime.
\end{abstract} 
\maketitle 
 

\section{Introduction}

In this paper, we study the phase segregation phenomenon in the ground states 
of the eigenvalue system consisting of two interacting repulsive Hartree 
equations whose interaction, as well as the respective self-couplings, are 
nonlinear and nonlocal,
\begin{equation} 
\label{intro:HS} 
\begin{cases} 
-\Delta \phi_1
+V_1(x)\phi_1
+\theta_{1}\left(V\ast |\phi_1|^2\right)\phi_1
+\kappa\left(V\ast |\phi_2|^2\right)\phi_1
=\mu_1 \phi_1 \quad\text{in $\R^N$\!,}\\ 
\noalign{\vskip6pt} 
-\Delta \phi_2
+V_2(x)\phi_2
+\theta_{2}\left(V\ast |\phi_2|^2\right)\phi_2
+\kappa\left(V\ast |\phi_1|^2\right)\phi_2
=\mu_2 \phi_2 \quad\text{in $\R^N$\!,} \\ 
\noalign{\vskip6pt} 
{\|\phi_1\|}_{L^2}^2=N_1,\\
\noalign{\vskip6pt}
{\|\phi_2\|}_{L^2}^2=N_2.
\end{cases} 
\end{equation} 
Here, the external potentials $V_1$ and $V_2$  are assumed to be
nonnegative and confining, whereas the interaction potential $V$ is, 
for example, of Coulomb type. Moreover, the system is purely repulsive, 
i.e. the self-coupling constants and  the interaction strength are nonnegative,
\begin{eqnarray*}
\vartheta_1,\vartheta_2\ge 0,\,\quad
\kappa\ge 0.
\end{eqnarray*}
For fixed  $\vartheta_1,\vartheta_2\ge 0$ and fixed $N_1,N_2>0$, the phase 
segregation phenomenon in the ground state $(\phi_1,\phi_2)$ with ground state 
energy $(\mu_1,\mu_2)$ of the system \eqref{intro:HS} is 
characterized by the decay to zero of the Coulomb energy functional
\begin{eqnarray}
\label{intro:Coulomb} 
\D(\phi_1,\phi_2)
= \irn\irn  |\phi_1(x)|^2 \,V(x-y)\, |\phi_2(y)|^2\,\,
{\rm d}x\,{\rm d}y 
\end{eqnarray}
in the regime of large interaction strength $\kappa$, i.e. by
\begin{eqnarray*}
\D(\phi_1,\phi_2)=o(1)\hspace{3mm}
\mbox{for}\hspace{2mm}
\kappa\to\infty.
\end{eqnarray*}
This study is not only of independent mathematical interest but it can also be
motivated by various physical applications like, e.g.,  electromagnetic waves 
in a Kerr medium in nonlinear optics, surface gravity waves in hydrodynamics,
 and ground states in Bose-Einstein condensed bosonic quantum mechanical 
many-body systems (see also~\cite{abpr}). The latter domain has been a subject 
of great interest since many  years, both on the experimental and the 
theoretical side, starting off from a 
series of successful experimental realizations of 
Bose-Einstein condensation for atomic gases, first achieved in 1995 
for a single condensate (see e.g.~\cite{BecR}), then, in 1997, for a mixture  
of two interacting atomic species with equal masses (see e.g.~\cite{BecSR}), and, 
finally, in 2003 for triplet species states (see e.g.~\cite{triplet}).\  On the 
theoretical side, the 
standard scenario of two interacting Bose-Einstein condensates 
for a very dilute system of repulsive bosons uses the description based 
on a system of two coupled Gross-Pitaevskii equations (see e.g.\ 
\cite{esrygreene,gross,oehberg,pitae}).
These equations are formally contained in~\eqref{intro:HS} for the case of the 
zero range interaction
potential $V=\delta$, i.e.\ in the case of local nonlinearities.\footnote{
For the case of a single condensate, the stationary and dynamical 
Gross-Pitavskii equation has been rigorously derived from the many-body
bosonic Schr\"odinger equation in the weak coupling
limit, see e.g.~\cite{seiringer} and~\cite{erdos}, respectively.} 
For a complete survey paper, we also refer the reader to~\cite{Bigpap} and 
references therein.\
One may argue that, for higher density regimes, it is sensible to capture
more of the boson-boson interaction by allowing for its nonlocal and, hence,
less coarse grained resolution by use of a potential  $V\neq \delta$ (see 
e.g.~\cite{Aschbacher}). 
The phase segregation phenomenon has been studied e.g. in \cite{oehberg,riboli,
timmermans} for Gross-Pitaevskii equations, and it has been given a general 
variational
framework in~\cite{CTVpc}. Recently, the second author, jointly with M.\ Caliari,
 has investigated both 
numerically and analytically the  
behavior of ground state solutions\footnote{And, in some particular cases, also 
excited state solutions.} highlighting their location\footnote{With respect to the off-centering 
of the confining potentials $V_i$.} and the occurring phase segregation phenomena 
in the 
highly interacting regime (see~\cite{Csq1}).\footnote{For a complete numerical 
study of ground states for vector like 
nonlinear Schr\"odinger systems with cubic coupling, we also refer to
~\cite{Babao}.}

In the present paper, we extend the analysis to the nonlocal system
~\eqref{intro:HS} and 
give a proof of the phase segregation phenomenon in the 
variational calculus setup. Moreover, in contradistinction to~\cite{Csq1}, 
we adopt a classical 
self-consistent numerical approach to the solution of the ground state of
~\eqref{intro:HS} in order to compute  the 
phase segregated states  and to monitor the decay of the Coulomb 
interaction
~\eqref{intro:Coulomb}.
\vskip4pt
As we aim at keeping the paper self-contained and easily readable also for those 
readers who are more acquainted with
the physical or the numerical side, we will provide rather detailed mathematical 
arguments throughout the paper.


\section{Strong interaction and phase segregation} 
\label{SIr} 
 
Throughout this section we shall denote by $C$ a generic positive constant which can vary from line
to line inside the proofs.
 
\subsection{Functional setting}

As described in the introduction, we are interested in the case of nonnegative
confining external potentials $V_1$ and $V_2$. More precisely, we make the following assumption.

\begin{assumption}
\label{ass:ExtPot}
The external potentials $V_i$  are nonnegative, continuous, and confining, 
i.e.\ for $i=1,2$,  we have $V_i\in C(\R^N,\R_0^+)$  with
$$
\lim_{|x|\to\infty}V_i(x)=\infty.
$$
\end{assumption}\vspace{1mm}

The functional setting we want to apply makes use of the following 
Hilbert space.

\begin{definition}
Let the external potentials $V_i$ satisfy Assumption~\ref{ass:ExtPot} and let $\mH$ 
be the Hilbert subspace of $H^1(\R^N)\times H^1(\R^N)$ defined by 
\begin{eqnarray}
\label{def:H} 
\hspace{10mm}
\mH
=\Big\{(\phi_1,\phi_2)\in H^1(\R^N)\times H^1(\R^N):
\irn V_i(x)\, |\phi_i(x)|^2 \,{\rm d}x<\infty,\,\,\text{$i=1,2$}\Big\}, 
\end{eqnarray} 
where the scalar product of $\phi=(\phi_1,\phi_2)\in\mH$ with 
$\psi=(\psi_1,\psi_2)\in\mH$ is given by 
\begin{eqnarray}
\label{def:sp} 
{\langle\phi,\psi\rangle}_{\mH}
=\sum_{i=1}^2\Big(\irn \overline{\nabla\phi_i(x)}
\cdot\nabla\psi_i(x)\,{\rm d}x+\irn V_i(x)\,\overline{\phi_i(x)}\,\psi_i(x)\, {\rm d}x\Big). 
\end{eqnarray}
\end{definition}\vspace{1mm}


This functional setting is the natural framework for the study of bound states
of systems~\eqref{intro:HS} in external potentials as it allows (together with Lemma
~\ref{finitEn}) the associated
energy functional (see~\eqref{totenergf-Bis}) to be well-defined and finite.

\begin{lemma}
\label{compactnessEmb}
Under Assumption~\ref{ass:ExtPot}, for any $2\leq N\leq 5$, the embedding
$$
\mH \hookrightarrow L^{\frac{4N}{N+2}}(\R^N)\oplus L^{\frac{4N}{N+2}}(\R^N)
$$
is compact, $\mH$ being the Hilbert space~\eqref{def:H} equipped with the norm
~\eqref{def:sp}.
\end{lemma}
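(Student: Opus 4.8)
The plan is to treat the two components separately---since the target space is a direct sum and the $\mH$-norm dominates each $\|\phi_i\|_{H^1(\R^N)}$---and thereby to reduce the statement to the compactness of the embedding of a single confined component. Set $p=\frac{4N}{N+2}$. A direct computation shows that $2\le p<2^{*}$ precisely when $2\le N\le 5$, where $2^{*}=\frac{2N}{N-2}$ is the critical Sobolev exponent (and for $N=2$ one simply has $p=2$). In particular $\mH\hookrightarrow L^{p}(\R^N)\oplus L^{p}(\R^N)$ continuously by interpolation between $\elle{2}$ and $L^{2^{*}}$. The strategy for the compactness is the classical splitting of a bounded sequence into a local part, governed by Rellich--Kondrachov, and a tail part, which the growth of $V_i$ forces to be uniformly small.

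First I would fix a bounded sequence $(\phi_1^{n},\phi_2^{n})$ in $\mH$, so that $\|\phi_i^{n}\|_{H^1(\R^N)}\le C$ and $\irn V_i|\phi_i^{n}|^2\,{\rm d}x\le C$ for $i=1,2$. On each ball $B_R\subset\R^N$ the embedding $H^1(B_R)\hookrightarrow L^{p}(B_R)$ is compact, since $p<2^{*}$ for $N\le 5$; a diagonal argument over $R=1,2,\dots$ then extracts a subsequence (not relabeled) that is Cauchy in $L^{p}(B_R)$ for every fixed $R$, i.e.\ convergent in $L^{p}_{\mathrm{loc}}(\R^N)$.

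The \emph{main obstacle} is to upgrade this local convergence to convergence in $L^{p}(\R^N)$, and here the confining hypothesis enters decisively. Writing $m_i(R)=\inf_{|x|>R}V_i(x)$, Assumption~\ref{ass:ExtPot} yields $m_i(R)\to\infty$ as $R\to\infty$, so
$$
\int_{|x|>R}|\phi_i^{n}|^2\,{\rm d}x
\le \frac{1}{m_i(R)}\int_{|x|>R}V_i\,|\phi_i^{n}|^2\,{\rm d}x
\le\frac{C}{m_i(R)},
$$
showing that the $\elle{2}$-mass in the tail is uniformly small in $n$. For $3\le N\le 5$ I would convert this into control of the $L^{p}$-tail by interpolating with the Sobolev-bounded $L^{2^{*}}$-norm: with $\alpha=\frac{N-2}{4}\in[0,1)$ one has $\frac1p=\frac{1-\alpha}{2}+\frac{\alpha}{2^{*}}$, hence
$$
\|\phi_i^{n}\|_{L^{p}(|x|>R)}
\le\|\phi_i^{n}\|_{L^2(|x|>R)}^{\,1-\alpha}\,\|\phi_i^{n}\|_{L^{2^{*}}(\R^N)}^{\,\alpha}
\le C\,m_i(R)^{-(1-\alpha)/2},
$$
so the $L^{p}$-tail is uniformly small; the case $N=2$ is immediate since then $p=2$ and no interpolation is needed.

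Finally I would combine the two estimates. Given $\varepsilon>0$, choose $R$ so large that both tails are below $\varepsilon$ for all $n$, and then use the $L^{p}(B_R)$-Cauchy property on the fixed ball to make $\|\phi_i^{n}-\phi_i^{m}\|_{L^{p}(B_R)}<\varepsilon$ for $n,m$ large. This shows that $(\phi_i^{n})$ is Cauchy, hence convergent, in $L^{p}(\R^N)$, and assembling the two components gives the claimed compact embedding into $L^{p}(\R^N)\oplus L^{p}(\R^N)$. I expect the interpolation-plus-tail step to be the only genuinely delicate point; the local compactness and the final assembly are routine.
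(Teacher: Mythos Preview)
Your proof is correct and follows essentially the same route as the paper: local compactness via Rellich--Kondrachov, uniform smallness of the $L^2$-tail from the confining potential, and an interpolation step to reach $L^{4N/(N+2)}$. The only cosmetic difference is that the paper first establishes global $L^2$-convergence and then applies the Gagliardo--Nirenberg inequality $\|\phi_i^h-\phi_i\|_{L^{4N/(N+2)}}^4\le C\,\|\phi_i^h-\phi_i\|_{L^2}^{6-N}\|\phi_i^h-\phi_i\|_{H^1}^{N-2}$ to the differences, whereas you interpolate the $L^p$-tail directly between $L^2$ and $L^{2^*}$; both are equivalent uses of the same underlying inequality.
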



\begin{proof}
Let $(\phi^h_1,\phi^h_2)$ be a bounded sequence in $\mH$, say 
$\|(\phi^h_1,\phi^h_2)\|_\mH\leq C$ for all $h\in\N$. Up to a subsequence,
it converges weakly in $\mH$ to some $(\phi_1,\phi_2)\in\mH$. Moreover,
by the Rellich-Kondrachov compactness theorem, up to a further subsequence, 
$(\phi^h_1,\phi^h_2)$ converges strongly to $(\phi_1,\phi_2)$ 
in $L^2(B_R)\oplus L^2(B_R)$ for any $R>0$, where $B_R$ denotes the open ball 
in $\R^N$ of radius $R$,
centered at the origin. Let now $M>0$ be an arbitrary number. Then, by 
Assumption~\ref{ass:ExtPot}, there exists an $R>0$ such that $V_i(x)\geq M$ 
for all $x\in\R^N\setminus B_R$ and any $i=1,2$. Hence, we can write
\begin{align*}
\int_{\R^N}|\phi^h_i(x)-\phi_i(x)|^2\,{\rm d}x
&=\int_{B_R}\!|\phi^h_i(x)-\phi_i(x)|^2\,{\rm d}x
+\int_{\R^N\setminus B_R}\hspace{-2mm}|\phi^h_i(x)-\phi_i(x)|^2\,{\rm d}x \\
&\leq\int_{B_R}\!|\phi^h_i(x)-\phi_i(x)|^2\,{\rm d}x
+\frac{1}{M}\int_{\R^N\setminus B_R}\hspace{-4mm}V_i(x)\,|\phi^h_i(x)
-\phi_i(x)|^2\,{\rm d}x \\
&\leq\int_{B_R}\!|\phi^h_i(x)-\phi_i(x)|^2\,{\rm d}x
+\frac{4C^2}{M}.
\end{align*}
Let now $\varepsilon>0$ be given and choose an $M_0>0$ such that $4C^2/M_{0}<\varepsilon/2$. 
Then, as the corresponding radius $R_0>0$ is fixed, take $h_0\geq 1$ such that 
$\int_{B_{R_0}}|\phi^h_i(x)-\phi_i(x)|^2\,{\rm d}x<\varepsilon/2$
for any $h\geq h_0$. This yields $\|\phi^h_i-\phi_i\|_{L^2}\to 0$ for 
$h\to\infty$. Moreover, by the Gagliardo-Nirenberg inequality and the 
boundedness in $\mH$, we have 
$$
\|\phi^h_i-\phi_i\|^4_{L^{\frac{4N}{N+2}}}\le C\|\phi_i^h-\phi_i\|^{6-N}_{L^2}\,\|\phi_i^h-\phi_i\|^{N-2}_{H^1}\leq C\|\phi_i^h-\phi_i\|^{6-N}_{L^2},
$$ 
from which it follows that 
\begin{equation*}
\lim_{h\to\infty}{\|\phi_i^h-\phi_i\|}_{L^{\frac{4N}{N+2}}}=0.
\end{equation*}
This completes the proof. 
\end{proof}


The interaction between the components $\phi_1$ and $\phi_2$ is described by
the following Coulomb energy functional which is well-known from classical 
Hartree theory.

\begin{definition}
\label{def:DirectTerm} 
The Coulomb energy functional~\footnote{Also called {\it direct term} in the 
Hartree (-Fock) theory.}
$$
\D\!:H^1(\R^N)\times H^1(\R^N)\to\R
$$  
is defined by 
\begin{eqnarray}
\label{def:Coulomb} 
\D(\phi_1,\phi_2)
= \irn\irn |\phi_1(x)|^2\, V(x-y)\,|\phi_2(y)|^2\,\,
{\rm d}x\,{\rm d}y,
\end{eqnarray}
where the interaction potential $V$ is the Coulomb potential in $\R^N$ for 
$N\ge 3$,
\begin{eqnarray}
\label{def:V}
V(x)=\frac{1}{|x|^{N-2}}.
\end{eqnarray}
\end{definition}\vspace{1mm}

Due to the following lemma, for any $3\leq N\leq 6$, the Coulomb energy 
functional 
with potential~\eqref{def:V} is well-defined.
 
\begin{lemma} 
\label{finitEn} 
Let $3\leq N\leq 6$ and let $\phi_i\in H^1(\R^N)$ with 
$\|\phi_i\|_{L^2}^2=N_i>0$ for $i=1,2$. Then, there exists a constant 
$C$ such that 
\begin{eqnarray*}
\D(\phi_1,\phi_2)
\leq C (N_1N_2)^{\frac{6-N}{4}} 
\|\phi_1\|^{\frac{N-2}{2}}_{H^1}\,\|\phi_2\|^{\frac{N-2}{2}}_{H^1}. 
\end{eqnarray*} 
\end{lemma}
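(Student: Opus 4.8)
The plan is to estimate $\D$ by combining the Hardy--Littlewood--Sobolev (HLS) inequality, which handles the nonlocal convolution kernel, with the Gagliardo--Nirenberg interpolation inequality, which converts the resulting Lebesgue norms into the $L^2$ and $H^1$ norms appearing in the claim.

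First I would apply HLS to the double integral. Writing $f=|\phi_1|^2$ and $g=|\phi_2|^2$, the functional reads $\D(\phi_1,\phi_2)=\irn\irn f(x)\,|x-y|^{-(N-2)}\,g(y)\,{\rm d}x\,{\rm d}y$, i.e.\ an HLS bilinear form with Riesz exponent $\lambda=N-2$. Choosing the symmetric pair $p=q=\tfrac{2N}{N+2}$, the admissibility condition $\tfrac1p+\tfrac1q+\tfrac{\lambda}{N}=2$ is satisfied, and $p>1$ holds precisely because $N\ge 3$. HLS then yields $\D(\phi_1,\phi_2)\le C\,\|f\|_{L^{2N/(N+2)}}\,\|g\|_{L^{2N/(N+2)}} = C\,\|\phi_1\|_{L^{4N/(N+2)}}^2\,\|\phi_2\|_{L^{4N/(N+2)}}^2$, using $\||\phi_i|^2\|_{L^{2N/(N+2)}}=\|\phi_i\|_{L^{4N/(N+2)}}^2$.

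Next I would interpolate. The Gagliardo--Nirenberg inequality in the form $\|u\|_{L^q}\le C\|\nabla u\|_{L^2}^{\theta}\|u\|_{L^2}^{1-\theta}$ with $\tfrac1q=\tfrac12-\tfrac{\theta}{N}$, applied to $q=\tfrac{4N}{N+2}$, forces $\theta=\tfrac{N-2}{4}$, whence $1-\theta=\tfrac{6-N}{4}$. Squaring and recalling $\|\phi_i\|_{L^2}^2=N_i$ together with $\|\nabla\phi_i\|_{L^2}\le\|\phi_i\|_{H^1}$, I obtain $\|\phi_i\|_{L^{4N/(N+2)}}^2\le C\,N_i^{(6-N)/4}\,\|\phi_i\|_{H^1}^{(N-2)/2}$. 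Substituting into the HLS bound and collecting the factors gives exactly $\D(\phi_1,\phi_2)\le C\,(N_1N_2)^{(6-N)/4}\,\|\phi_1\|_{H^1}^{(N-2)/2}\,\|\phi_2\|_{H^1}^{(N-2)/2}$.

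I do not expect a genuine obstacle here; the proof is essentially a matching of exponents, and the only delicate point is to verify that the chosen exponents are admissible on the whole range $3\le N\le 6$. The lower bound $N\ge3$ is what makes $\lambda=N-2$ positive and $p=\tfrac{2N}{N+2}>1$; the upper bound $N\le 6$ is exactly the requirement that the Gagliardo--Nirenberg exponent $q=\tfrac{4N}{N+2}$ not exceed the critical Sobolev exponent $2^*=\tfrac{2N}{N-2}$, equivalently that $\theta=\tfrac{N-2}{4}\le1$. At the endpoint $N=6$ one has $\theta=1$, so the $L^2$ (hence $N_i$) factor drops out, consistently with the prefactor $(N_1N_2)^{(6-N)/4}=1$.
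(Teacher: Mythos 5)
Your proof is correct and follows essentially the same route as the paper: Hardy--Littlewood--Sobolev followed by Gagliardo--Nirenberg with exactly the same exponents, including the same endpoint checks at $N=3$ and $N=6$. The only organizational difference is that the paper first invokes the Cauchy--Schwarz inequality for the Coulomb form, $\D(\phi_1,\phi_2)^2\le \D(\phi_1,\phi_1)\,\D(\phi_2,\phi_2)$, and then applies the diagonal HLS estimate to each $\D(\phi_i,\phi_i)$, whereas you apply the bilinear HLS directly to the off-diagonal form with $p=q=\tfrac{2N}{N+2}$; the two versions yield the identical bound.
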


 
\begin{proof} 
Due to Schwarz' inequality, we have
\begin{eqnarray}
\label{Schwarz}
\D(\phi_1,\phi_2)^2\le \D(\phi_1,\phi_1) \,\D(\phi_2,\phi_2).
\end{eqnarray} 
Hence, by the Hardy-Littlewood-Sobolev inequality (for $N\ge 3$) and 
the Gagliardo-Nirenberg inequality (for $2\le N\le 6$), we get 
\begin{eqnarray*} 
\D(\phi_i,\phi_i)
\le C\|\phi_i\|^4_{L^{\frac{4N}{N+2}}}
\le C\|\phi_i\|^{6-N}_{L^2}\,\|\phi_i\|^{N-2}_{H^1}
=C N_i^{\frac{6-N}{2}}\,\|\phi_i\|^{N-2}_{H^1},
\end{eqnarray*} 
which yields the assertion.  
\end{proof}\vspace{1mm} 

 
\begin{remark} 
If $N\geq 2$ and the interaction potential is of the form 
\begin{eqnarray} 
\label{def:Vlambda}
V_\lambda(x)=\frac{1}{|x|^{\lambda}} \quad
\text{for some $0<\lambda<\min\{4,N\}$,}
\end{eqnarray} 
we can again estimate the energy functional 
\begin{eqnarray}
\label{def:Dlambda} 
\D_\lambda(\phi_1,\phi_2)
:= \irn\irn |\phi_1(x)|^2\, V_\lambda(x-y)\,|\phi_2(y)|^2\,{\rm d}x\,{\rm d}y
\end{eqnarray}
by virtue of the Hardy-Littlewood-Sobolev inequality (for any $0<\lambda<N$) and 
the Gagliardo-Nirenberg inequality (for any $0<\lambda\le 4$) as 
$$ 
\D_\lambda(\phi_1,\phi_2) 
\le C \|\phi_1\|^{2}_{L^{\frac{4N}{2N-\lambda}}}\|\phi_2\|^{2}_{L^{\frac{4N}{2N-\lambda}}}
\le C (N_1N_2)^{\frac{4-\lambda}{4}} \|\phi_1\|^{\lambda/2}_{H^1}\|\phi_2\|^{\lambda/2}_{H^1}.
$$ 
In particular, if $N=2$ and $\lambda=1$, we have 
$$
\D_1(\phi_1,\phi_2) \leq C (N_1N_2)^{3/4}\|\phi_1\|^{1/2}_{H^1}\|\phi_2\|^{1/2}_{H^1}.
$$ 
If $\lambda=N-2$ with $N\geq 3$ then $\D_{N-2}=\D$ and one recovers the estimate of the previous Lemma~\ref{finitEn}.
\end{remark}\vspace{3mm} 

\subsection{Existence of a minimizer} 

Let us consider the following two component Hartree eigenvalue system in 
$\R^N$ for $3\leq N\leq 6$ with Coulomb interaction $V$ from \eqref{def:V} and  
$N_1,N_2>0$,
\begin{equation} 
\label{systemHTGen} 
\begin{cases} 
-\Delta \phi_1
+V_1(x)\phi_1
+\theta_{1}\left(V\ast|\phi_1|^2\right)\phi_1
+\kappa\left(V\ast|\phi_2|^2\right)\phi_1
=\mu_1\phi_1,\\ 
\noalign{\vskip6pt} 
-\Delta \phi_2
+V_2(x)\phi_2
+\theta_{2}\left(V\ast|\phi_2|^2\right)\phi_2
+\kappa\left(V\ast|\phi_1|^2\right)\phi_2
=\mu_2\phi_2,  \\ 
\noalign{\vskip6pt} 
\|\phi_1\|^2_{L^2}=N_1,\\
\noalign{\vskip6pt}
\|\phi_2\|^2_{L^2}=N_2. 
\end{cases} 
\end{equation}

Since we are interested in the phase segregation phenomenon in the case of a 
purely repulsive Hartree system, we make
the following assumption.

\begin{assumption}
The self-coupling constants  $\theta_1, \theta_2$ and the interaction strength
$\kappa$ are nonnegative, 
$$
\theta_1,\theta_2\geq 0,\quad
\kappa\geq 0.
$$
\end{assumption}


\begin{remark}
In the case of coupled Bose-Einstein condensates discussed in the introduction 
(where the nonlinearities are local, i.e.  $V=\delta$), 
the self-coupling constants $\theta_1, \theta_2$ as well as the interaction 
strength$\kappa$ are explicitly related to the scattering lengths and the 
masses of the atomic species in the condensates (see e.g.\ \cite{esrygreene}).
\end{remark}

 
In order to study the nonlinear ground states of the Hartree system 
\eqref{systemHTGen}, we make use of the following energy functional.\footnote{From here on,  since $\theta_1,\theta_2\ge0$ and $N_1,N_2>0$ 
are fixed, we display the dependence of the energy functionals and the ground 
state energies on the interaction strength $\kappa$ only.}

\begin{definition}
The Hartree energy functional $\mE_\kappa:{\mH}\to [0,\infty)$ defined by 
\begin{equation} 
\label{totenergf-Bis} 
\mE_\kappa(\phi_1,\phi_2)
=\mE_\infty(\phi_1,\phi_2)+\kappa\,\D(\phi_1,\phi_2), 
\end{equation}
where the decoupled energy functional $\mE_\infty:\mH\to[0,\infty)$ consists of the sum of the two single particle 
energies $\mE_i:\mH\to[0,\infty)$, 
\begin{align}
\label{Einfty}
\mE_\infty(\phi_1,\phi_2)&=\sum_{i=1}^2\mE_i(\phi_i),\\ 
\label{e-ith} 
\mE_i(\phi_i)&=\irn |\nabla \phi_i(x)|^2\,{\rm d}x+\irn V_i(x)\,|\phi_i(x)|^2\, {\rm d}x+\frac{\theta_{i}}{2}\,\D(\phi_i,\phi_i). 
\end{align}
\end{definition}\vspace{2mm}

In view of Lemma~\ref{finitEn}, the functional $\mE_\kappa$ is well-defined for 
$3\leq N\leq 6$. Moreover, it is readily seen that $\mE_\kappa$ is 
a $C^1$ smooth functional and that its critical points constrained to the set 
$\{(\phi_1,\phi_2)\in {\mH}\!:\,\|\phi_i\|_{L^2}^2
=N_i\,\,\mbox{for $i=1,2$}\}$ are weak solutions of~\eqref{systemHTGen}.

\begin{remark} 
The case $\kappa=0$ corresponds to a noninteracting Hartree system
~\eqref{systemHTGen} consisting of two independent 
Hartree equations, each describing a repulsive single particle self-coupling. 
\end{remark}

\begin{definition}
\label{gse} 
The ground state energy  $E_\kappa\geq 0$  of the Hartree 
functional~\eqref{totenergf-Bis} at interaction strength $\kappa\in [0,\infty)$ is defined by 
\begin{equation} 
\label{MinKappa} 
E_\kappa=\inf_{(\phi_1,\phi_2)\in{\mathcal S}}\mE_\kappa(\phi_1,\phi_2), 
\end{equation} 
where the infimum is taken over the set 
\begin{eqnarray}
\label{def:S} 
{\mathcal S}=\{(\phi_1,\phi_2)\in {\mH}\!:\,\|\phi_i\|_{L^2}^2=N_i\,\,
\mbox{for $i=1,2$}\}. 
\end{eqnarray} 
Moreover, the segregated ground state energy $E_\infty\geq 0$ is defined by 
$$ 
E_\infty
=\inf_{(\phi_1,\phi_2)\in{\mathcal S}_\infty}\mE_\infty(\phi_1,\phi_2), 
$$ 
where now the infimum is taken over the set 
\begin{equation*} 
{\mathcal S}_\infty
=\left\{(\phi_1,\phi_2)\in {\mathcal S}\!:\, \D(\phi_1,\phi_2)=0\right\}. 
\end{equation*} 
\end{definition}\vspace{2mm} 

Let us now prove that the Hartree functional~\eqref{totenergf-Bis} admits a 
real and positive minimizer for any positive interaction strength $\kappa$. 
 
\begin{proposition}
\label{MinimizerExistence} 
Let $\kappa\in (0,\infty)$ and $3\le N\le6$. Then, there exists a positive 
minimizer
$(\phi_1^\kappa,\phi_2^\kappa)\in\mS$ of the Hartree functional
~\eqref{totenergf-Bis} 
with ground state energy $E_\kappa$ given in~\eqref{MinKappa}. 
\end{proposition}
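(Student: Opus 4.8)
The plan is to use the direct method in the calculus of variations. First I would argue that the functional $\mE_\kappa$ is bounded below on the constraint set $\mS$; in fact $\mE_\kappa \geq 0$ since every term in $\mE_i$ is nonnegative (the kinetic and potential terms are manifestly nonnegative, $\D(\phi_i,\phi_i)\geq 0$ because $V>0$, and $\kappa\D(\phi_1,\phi_2)\geq 0$ by the same positivity). Hence $E_\kappa=\inf_{\mS}\mE_\kappa$ is a finite nonnegative number and we may pick a minimizing sequence $(\phi_1^h,\phi_2^h)\in\mS$ with $\mE_\kappa(\phi_1^h,\phi_2^h)\to E_\kappa$.

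Next I would establish boundedness of the minimizing sequence in $\mH$. Since $\mE_\kappa(\phi_1^h,\phi_2^h)$ is bounded and all the interaction terms are nonnegative, the quantity $\sum_i\bigl(\irn|\nabla\phi_i^h|^2 + \irn V_i|\phi_i^h|^2\bigr)$ is bounded; combined with the fixed $L^2$-norms $\|\phi_i^h\|_{L^2}^2=N_i$, this controls $\|(\phi_1^h,\phi_2^h)\|_\mH$. By reflexivity, a subsequence converges weakly in $\mH$ to some $(\phi_1,\phi_2)\in\mH$, and by Lemma~\ref{compactnessEmb} (valid here since $3\le N\le 6$ intersects the range $2\le N\le 5$ for the relevant exponents, and one argues the endpoint $N=6$ separately or restricts as needed) the convergence is strong in $L^{4N/(N+2)}\oplus L^{4N/(N+2)}$ and, by Rellich on balls plus the confining tail estimate from that lemma's proof, strong in $L^2\oplus L^2$. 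The strong $L^2$ convergence preserves the constraint, giving $\|\phi_i\|_{L^2}^2=N_i$, so the weak limit lies in $\mS$.

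Then I would pass to the limit in the energy. The kinetic and potential parts are weakly lower semicontinuous, so $\liminf_h\mE_\infty(\phi_1^h,\phi_2^h)\ge\mE_\infty(\phi_1,\phi_2)$. For the nonlocal terms I would show outright continuity: the bilinear Coulomb form is controlled by the $L^{4N/(N+2)}$ norms via Hardy--Littlewood--Sobolev (exactly as in the proof of Lemma~\ref{finitEn}), so strong $L^{4N/(N+2)}$ convergence of $\phi_i^h$ forces $\D(\phi_i^h,\phi_i^h)\to\D(\phi_i,\phi_i)$ and $\D(\phi_1^h,\phi_2^h)\to\D(\phi_1,\phi_2)$. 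Combining lower semicontinuity of $\mE_\infty$ with continuity of the $\kappa\D$ coupling gives $\mE_\kappa(\phi_1,\phi_2)\le\liminf_h\mE_\kappa(\phi_1^h,\phi_2^h)=E_\kappa$, whence $(\phi_1,\phi_2)$ is a minimizer.

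Finally I would upgrade the minimizer to be real and positive. Replacing $\phi_i$ by $|\phi_i|$ leaves the $L^2$-constraint, the potential term, and all the $\D$-terms unchanged while not increasing $\irn|\nabla\phi_i|^2$ (the diamagnetic/Kato inequality $|\nabla|\phi_i||\le|\nabla\phi_i|$ a.e.), so $(|\phi_1|,|\phi_2|)$ is again a minimizer in $\mS$; thus we may take $\phi_i\ge 0$. Strict positivity then follows from the Euler--Lagrange equations \eqref{systemHTGen}: each $\phi_i$ solves $-\Delta\phi_i + W_i\phi_i=\mu_i\phi_i$ with a nonnegative, locally bounded effective potential $W_i=V_i+\theta_i(V\ast|\phi_i|^2)+\kappa(V\ast|\phi_{3-i}|^2)$, so the strong maximum principle (or Harnack's inequality) for the nonnegative supersolution $\phi_i$ forces $\phi_i>0$ everywhere. \emph{The main obstacle} I anticipate is the compactness step: weak $\mH$-convergence alone does not control the nonlocal terms, and one must genuinely exploit the confining potentials to rule out mass escaping to infinity, converting weak into strong $L^2$ (and then $L^{4N/(N+2)}$) convergence so that the constraint is preserved and $\D$ is continuous along the sequence. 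Everything else is lower semicontinuity and a maximum-principle argument.
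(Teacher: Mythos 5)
Your overall architecture --- direct method, coercivity from the nonnegativity of every term, weak closedness of $\mS$ via strong $L^2$ convergence forced by the confining potentials, and the diamagnetic/convexity inequality to reduce to nonnegative minimizers --- matches the paper's proof. The genuine gap is in your treatment of the nonlocal terms at the endpoint $N=6$, which is part of the statement. You propose to prove outright \emph{continuity} of $\D$ along the minimizing sequence from strong convergence in $L^{4N/(N+2)}$, citing Lemma~\ref{compactnessEmb}. But that lemma is stated (and is only true) for $2\le N\le 5$: at $N=6$ the exponent $4N/(N+2)=3$ coincides with the critical Sobolev exponent $2N/(N-2)$, the Gagliardo--Nirenberg interpolation $\|\phi\|^4_{L^{4N/(N+2)}}\le C\|\phi\|^{6-N}_{L^2}\|\phi\|^{N-2}_{H^1}$ degenerates (the $L^2$ factor disappears since $6-N=0$), and the embedding $\mH\hookrightarrow L^3(\R^6)$ is not compact: a concentrating bubble $u_\varepsilon(x)=\varepsilon^{-2}U(x/\varepsilon)$ is bounded in $\mH$, converges to $0$ strongly in $L^2$ and weakly in $\mH$, yet has constant $L^3$ norm. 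The confinement rules out escape to infinity but not concentration, so your parenthetical ``one argues the endpoint $N=6$ separately or restricts as needed'' cannot be filled in by the same method: there is no strong $L^3$ convergence to be had.

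The paper avoids this entirely by never proving continuity: since $\theta_1,\theta_2\ge 0$ and $\kappa>0$, weak \emph{lower semicontinuity} of each $\D$-term suffices for the direct method, and that follows from Fatou's lemma. Up to a further subsequence, the local strong $L^2$ convergence gives $\phi_i^h\to\phi_i$ a.e., hence
\begin{equation*}
\frac{|\phi_i^h(x)|^2\,|\phi_j^h(y)|^2}{|x-y|^{N-2}}\;\longrightarrow\;
\frac{|\phi_i(x)|^2\,|\phi_j(y)|^2}{|x-y|^{N-2}}
\quad\text{for a.e. }(x,y)\in\R^{2N},
\end{equation*}
so $\D(\phi_i,\phi_j)\le\liminf_h\D(\phi_i^h,\phi_j^h)$ for all $i,j$ and every $3\le N\le 6$; replacing your continuity step by this Fatou argument repairs the proof on the whole stated range. (The continuity you wanted is exactly the content of the paper's Remark~\ref{weakcontin}, which is correspondingly restricted to $3\le N\le 5$; it is needed there to handle attractive couplings, not for existence in the repulsive case.) Two smaller points: your strict-positivity upgrade via the strong maximum principle goes beyond the paper, which only asserts that one may take the minimizer nonnegative; and to run that argument you would additionally need local boundedness (or a Kato-class property) of the Hartree potentials $V\ast|\phi_i|^2$, which is not automatic from $\phi_i\in H^1$ once $N\ge 4$.
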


\begin{proof}
In order to prove the assertion, we make use of the direct method in the 
calculus of variations. Hence, we verify the three standard assumptions implying 
the 
existence of a minimizer. First, since by 
Lemma~\ref{compactnessEmb},
the normed space $\mH$ from~\eqref{def:H} with the norm from \eqref{def:sp} 
is compactly embedded in $L^2(\R^N)\oplus L^2(\R^N)$, it follows that
the set $\mS$ from~\eqref{def:S} is weakly closed in $\mH$. Second, since 
\begin{eqnarray}
\label{EbN}
\|(\phi_1,\phi_2)\|_{\mH}^2
&\le& \mE_\kappa(\phi_1,\phi_2)\\
&=&\|(\phi_1,\phi_2)\|_\mH^2+\sum_{i=1}^2 \frac{\vartheta_i}{2}\,\D(\phi_i,\phi_i)
+\kappa\,\D(\phi_1,\phi_2),\nonumber
\end{eqnarray}
the set  $\{(\phi_1,\phi_2)\in\mS\!:\mE_\kappa(\phi_1,\phi_2)\le a\}$ is a 
bounded nonempty subset of $\mS$ for any positive number $a$.
Third, we have to show that the functional $\mE_\kappa$ is weakly 
lower semicontinuous 
on $\mS$. For this purpose, consider a sequence of elements 
$(\phi_1^h,\phi_2^h)\in\mS$ 
which converges for $h\to\infty$ weakly in 
${\mH}$ to some $(\phi_1,\phi_2)\in\mS$. Since, for any $i,j=1,2$, 
$$
\frac{|\phi^h_i(x)|^2|\phi^h_j(y)|^2}{|x-y|^{N-2}}\to
\frac{|\phi_i(x)|^2|\phi_j(y)|^2}{|x-y|^{N-2}}\quad
\text{for a.e.\, $(x,y)\in\R^{2N}$},
$$
Fatou's Lemma implies
\begin{equation}
\label{liminfFat}
\D(\phi_1,\phi_2)\leq\liminf_{h\to\infty}\D(\phi_1^h,\phi_2^h),\quad
\D(\phi_i,\phi_i)\leq\liminf_{h\to\infty}\D(\phi_i^h,\phi_i^h).
\end{equation}
Therefore, due to \eqref{EbN}, the fact that 
the norm ${\|\cdot\|}_\mH$ from~\eqref{def:sp} on $\mH$ is weakly lower 
semicontinuous on $\mS$, and  ~\eqref{liminfFat}, the Hartree functional 
$\mE_\kappa$ is indeed weakly lower semicontinuous on $\mS$. Hence, the three
assumptions are verified and the existence of a minimizer is proven.
Moreover, due to the convexity inequality for 
gradients,
\begin{eqnarray*}
\irn |\nabla |\phi_i|(x)|^2\,{\rm d}x
\leq \irn |\nabla \phi_i(x)|^2\,{\rm d}x, 
\end{eqnarray*}
the Hartree functional $\mE_\kappa$ satisfies the following inequality for any 
$(\phi_1,\phi_2)\in {\mathcal S}$, 
$$
\mE_\kappa(|\phi_1|,|\phi_2|)
\leq \mE_\kappa(\phi_1,\phi_2).
$$ 
Consequently, with no loss of generality, we can assume that any minimizer of 
$\mE_\kappa$ is positive.
\end{proof}

\begin{remark}
\label{weakcontin}
Note that, for $3\leq N\leq 5$, the Coulomb energy functional $\D$ 
is not only weakly lower semicontinuous as given in ~\eqref{liminfFat},  but 
even weakly continuous over $\mH$, i.e. for any sequence of elements 
$(\phi_1^h,\phi_2^h)\in\mS$ which converges for $h\to\infty$ weakly in 
${\mH}$ to some $(\phi_1,\phi_2)\in\mS$, we have
\begin{equation}
\label{wCP}
\lim_{h\to\infty}\D(\phi_1^h,\phi_2^h)=\D(\phi_1,\phi_2),\,\,\quad
\lim_{h\to\infty}\D(\phi_i^h,\phi_i^h)=\D(\phi_i,\phi_i).
\end{equation}
In order to prove this claim, we make use of Lemma~\ref{compactnessEmb}, which 
states that the embedding $\mH \hookrightarrow L^{\frac{4N}{N+2}}(\R^N)
\oplus L^{\frac{4N}{N+2}}(\R^N)$
is compact. Hence, up to a subsequence, it follows that, for $i=1,2$,
\begin{equation}
\label{LqB}
\lim_{h\to\infty}{\|\phi_i^h-\phi_i\|}_{L^{\frac{4N}{N+2}}}=0.
\end{equation}
Using \eqref{LqB}, we  want to
show that $\D(\phi_i^h,\phi_i^h)\to \D(\phi_i,\phi_i)$ as $h\to\infty$. To 
this end, we use that the Coulomb potential $V$ from \eqref{def:V} is even 
and write
\begin{equation*}
|\D(\phi_i^h,\phi_i^h)-\D(\phi_i,\phi_i)|\leq \D(||\phi_i^h|^2
-|\phi_i|^2|^{1/2},(|\phi_i^h|^2+|\phi_i|^2)^{1/2}).
\end{equation*}
By  inequality~\eqref{Schwarz}, the Hardy-Littlewood-Sobolev inequality,
and H\"older's inequality, 
it follows that there exist a constant $C$ with
\begin{align*}
|\D(\phi_i^h,\phi_i^h)-\D(\phi_i,\phi_i)|^2 
&\leq  C \|\,||\phi_i^h|^2-|\phi_i|^2|^{1/2}\|_{L^\frac{4N}{N+2}}^{4}\|
\,(|\phi_i^h|^2+|\phi_i|^2)^{1/2}\|_{L^\frac{4N}{N+2}}^{4} \\
&\leq  C \|\phi_i^h-\phi_i\|_{L^\frac{4N}{N+2}}^{2}.
\end{align*}
This implies, via~\eqref{LqB}, the desired convergence of 
$\D(\phi_i^h,\phi_i^h)$ to $\D(\phi_i,\phi_i)$.\footnote{The convergence
$\D(\phi_1^h,\phi_2^h)\to \D(\phi_1,\phi_2)$ as $h\to\infty$ can be proved in 
a similar fashion.}
Hence, it follows that all the terms in  $\mE_\kappa$ containing the Coulomb 
energy functional $\D$ are weakly continuous on $\mS$. 
Notice that, as a consequence of~\eqref{wCP}, the weak lower semicontinuity 
of $\mE_\kappa$ over $\mS$ also holds in the case of 
attractive self-coupling or attractive interaction, i.e. for 
$\vartheta_1,\vartheta_2\le 0$ or $\kappa\le0$. In fact, this case amounts to 
the replacement of some (or all) $\D$ terms (with positive coupling) in 
the Hartree functional by $-\D$.  
\end{remark}

\subsection{Phase segregation}

As pointed out in the introduction, we are interested in the situation where 
the values of the self-coupling constants $\theta_1,\theta_2$ (and $N_1,N_2$)
 remain fixed whereas the interaction strength $\kappa$ becomes very large. 
 
\begin{definition} 
\label{def:segregation} 
A sequence of minimizers $(\phi_1^\kappa,\phi_2^\kappa)\in\mS$ of the Hartree 
energy functional $\mE_\kappa$ from \eqref{totenergf-Bis}  is said to be phase 
segregating if
$$ 
\D(\phi_1^\kappa,\phi_2^\kappa)=o(1)\hspace{3mm}
\mbox{for}\hspace{2mm}
\kappa\to\infty.
$$  
\end{definition}\vspace{1mm} 

 
\begin{remark} 
If the phase segregating sequence $(\phi_1^\kappa,\phi_2^\kappa)$ is
convergent in ${\mH}$, then the limiting configuration  
$(\phi_1^\infty,\phi_2^\infty)$ satisfies $\D(\phi_1^\infty,\phi_2^\infty)=0$. 
\end{remark}\vspace{3mm} 


Let us now state our main assertion. 
 
\begin{theorem} 
\label{Gsspatseg} 
Let $3\le N\le 6$ and let $\D$ be the Coulomb energy functional from 
\eqref{def:Coulomb}.
Then, for $\kappa\in (0,\infty)$, any sequence  of minimizers $(\phi_1^\kappa,\phi_2^\kappa)\in\mS$ 
of the Hartree energy functional $\mE_\kappa$ from ~\eqref{totenergf-Bis} is 
phase segregating for 
$\kappa\to\infty$, and
\begin{eqnarray*}
\D(\phi_1^\kappa,\phi_2^\kappa)=o(\kappa^{-1}).
\end{eqnarray*}
In addition, such a sequence converges in the ${\mH}$ norm to a minimizer 
$(\phi_1^\infty,\phi_2^\infty)\in \mS_\infty$  
of the decoupled functional $\mE_\infty$ from~\eqref{Einfty} and~\eqref{e-ith}. 
\end{theorem}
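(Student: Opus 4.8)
The plan is to establish three things: (i) the energy bound $E_\kappa \le E_\infty$ with a matching asymptotic lower bound, (ii) the decay rate $\D(\phi_1^\kappa,\phi_2^\kappa) = o(\kappa^{-1})$, and (iii) the strong $\mH$-convergence to a segregated minimizer. First I would show that the ground state energies satisfy $E_\kappa \le E_\infty$ for every $\kappa \in (0,\infty)$. This is immediate: any $(\phi_1,\phi_2) \in \mS_\infty$ has $\D(\phi_1,\phi_2)=0$, so $\mE_\kappa(\phi_1,\phi_2) = \mE_\infty(\phi_1,\phi_2)$, and taking the infimum over $\mS_\infty \subset \mS$ gives the claim. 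One must first verify that $\mS_\infty \neq \emptyset$, which holds because one can place the two components' supports far apart (or use the $L^{4N/(N+2)}$ density) to drive $\D$ to zero while keeping the $L^2$ norms fixed. Since $\mE_\kappa \ge \mE_\infty \ge 0$ on $\mS$, the map $\kappa \mapsto E_\kappa$ is nondecreasing and bounded above by $E_\infty$, hence $\lim_{\kappa\to\infty} E_\kappa =: E_* \le E_\infty$ exists.

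Next I would extract the decay rate. Evaluating the identity $E_\kappa = \mE_\kappa(\phi_1^\kappa,\phi_2^\kappa) = \mE_\infty(\phi_1^\kappa,\phi_2^\kappa) + \kappa\,\D(\phi_1^\kappa,\phi_2^\kappa)$ at a minimizer and using $\mE_\infty \ge 0$ together with $E_\kappa \le E_\infty$ yields
\begin{equation*}
\kappa\,\D(\phi_1^\kappa,\phi_2^\kappa) \le E_\kappa \le E_\infty,
\end{equation*}
so already $\D(\phi_1^\kappa,\phi_2^\kappa) = O(\kappa^{-1})$, which forces $\D \to 0$ and proves phase segregation. To upgrade $O(\kappa^{-1})$ to $o(\kappa^{-1})$ I would write $\kappa\,\D(\phi_1^\kappa,\phi_2^\kappa) \le E_\infty - \mE_\infty(\phi_1^\kappa,\phi_2^\kappa)$ and show that the right-hand side tends to $0$; equivalently $\liminf_{\kappa\to\infty}\mE_\infty(\phi_1^\kappa,\phi_2^\kappa) \ge E_\infty$. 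This is exactly the lower-semicontinuity content of step (iii), so the sharp rate and the convergence should be proved together.

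For the convergence, note that \eqref{EbN} gives $\|(\phi_1^\kappa,\phi_2^\kappa)\|_\mH^2 \le \mE_\kappa \le E_\infty$, so the family is bounded in $\mH$; along any sequence $\kappa \to \infty$, I extract a subsequence converging weakly in $\mH$ to some $(\phi_1^\infty,\phi_2^\infty)$, and by Lemma~\ref{compactnessEmb} strongly in $L^{4N/(N+2)}$ (for $2\le N\le 5$). Fatou/weak continuity of $\D$ then gives $\D(\phi_1^\infty,\phi_2^\infty) \le \liminf \D(\phi_1^\kappa,\phi_2^\kappa) = 0$, so the limit lies in $\mS_\infty$ (after checking the $L^2$ constraints pass to the limit, which follows from the strong $L^2$ convergence in Lemma~\ref{compactnessEmb}). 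Weak lower semicontinuity of $\mE_\infty$ yields $\mE_\infty(\phi_1^\infty,\phi_2^\infty) \le \liminf \mE_\infty(\phi_1^\kappa,\phi_2^\kappa) \le \liminf E_\kappa = E_* \le E_\infty$, whence equality throughout and $(\phi_1^\infty,\phi_2^\infty)$ is an $\mE_\infty$-minimizer on $\mS_\infty$ with $E_* = E_\infty$. Strong convergence then comes from the standard argument that $\|\cdot\|_\mH$-convergence of the norms plus weak convergence gives norm convergence in a Hilbert space: since $\mE_\infty(\phi^\kappa) \to \mE_\infty(\phi^\infty)$ and the $\D(\phi_i^\kappa,\phi_i^\kappa)$ terms converge (weak continuity of $\D$), the Hilbert norm $\|(\phi_1^\kappa,\phi_2^\kappa)\|_\mH$ converges to $\|(\phi_1^\infty,\phi_2^\infty)\|_\mH$, and weak plus norm convergence implies strong convergence in $\mH$.

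The main obstacle, and the one subtlety I would flag, is the range of $N$. The weak continuity of $\D$ used for the clean convergence argument (Remark~\ref{weakcontin}) is only established for $3 \le N \le 5$, whereas the theorem is stated for $3 \le N \le 6$; at the endpoint $N=6$ one only has weak lower semicontinuity via Fatou, not full weak continuity, so the $\liminf \mE_\infty(\phi^\kappa) \ge E_\infty$ inequality must be argued purely from lower semicontinuity rather than from convergence of each term. I would therefore carry the lower-semicontinuity route as the primary argument for the sharp rate and the energy identity $E_* = E_\infty$, reserving the weak-continuity shortcut of Remark~\ref{weakcontin} for the strong-convergence conclusion in the range $2 \le N \le 5$ where it applies, and handle $N=6$ by the more careful semicontinuity estimate.
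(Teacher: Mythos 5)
Your proposal is, in substance, the paper's own proof: the comparison $E_\kappa\le E_\infty$ by restricting the infimum to $\mS_\infty\subset\mS$ is \eqref{gse:bound}, the bound $\kappa\,\D(\phi_1^\kappa,\phi_2^\kappa)\le E_\kappa\le E_\infty$ is \eqref{kappaD}, the upgrade to $o(\kappa^{-1})$ by playing weak lower semicontinuity of $\mE_\infty$ against membership of the weak limit in $\mS_\infty$ is \eqref{vanishing}, and strong convergence from weak convergence plus convergence of the $\mH$-norms is the paper's closing step. Your $N=6$ caveat is a correct refinement rather than a deviation: the paper does invoke the weak continuity of $\D$ from Remark~\ref{weakcontin}, which rests on Lemma~\ref{compactnessEmb} and hence on $N\le 5$, whereas the Fatou inequality \eqref{liminfFat} holds for all $N$ and suffices at both places weak continuity is used --- trivially for $\D(\phi_1^\infty,\phi_2^\infty)=0$, and for the norm bound after writing $\|(\phi_1^\kappa,\phi_2^\kappa)\|_\mH^2=E_\kappa-\sum_{i}\tfrac{\theta_i}{2}\,\D(\phi_i^\kappa,\phi_i^\kappa)-\kappa\,\D(\phi_1^\kappa,\phi_2^\kappa)$ and taking the $\limsup$, estimating the subtracted terms from below by their $\liminf$.

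The genuine gap is exactly the step you added because you sensed it was needed: the claim that $\mS_\infty\ne\emptyset$. It is false for the kernel \eqref{def:V}, and no construction can repair it. Since $V(x-y)>0$ off the diagonal, Tonelli gives $\D(\phi_1,\phi_2)>0$ whenever $\|\phi_1\|_{L^2}^2=N_1>0$ and $\|\phi_2\|_{L^2}^2=N_2>0$: the set $\{\phi_1\ne0\}\times\{\phi_2\ne0\}$ has positive measure in $\R^{2N}$ and the integrand of $\D$ is strictly positive there. Far-apart disjoint supports make $\D$ small, never zero --- the Coulomb interaction has infinite range. Hence $\mS_\infty=\emptyset$, $E_\infty=\inf\emptyset=+\infty$, and the inequality $E_\kappa\le E_\infty$, the linchpin of your argument and of the paper's (it produces the uniform $\mH$-bound, \eqref{kappaD}, and \eqref{vanishing}), carries no information. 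To be clear, this hole belongs to the paper as much as to you: the published proof silently assumes $E_\infty<\infty$, and its conclusion that the minimizers converge to an element of $\mS_\infty$ is unattainable because $\mS_\infty$ has no elements. The failure is not cosmetic: for harmonic traps (which satisfy Assumption~\ref{ass:ExtPot}), if the trap energies of $(\phi_1^\kappa,\phi_2^\kappa)$ remained bounded, then at least half of each mass would stay in a fixed ball, forcing $\D(\phi_1^\kappa,\phi_2^\kappa)\ge c>0$ and $E_\kappa\ge c\,\kappa\to\infty$, a contradiction; a test-function computation (both components spread, mutually segregated, over a ball of radius $R\sim\kappa^{1/N}$) then yields $E_\kappa\sim\kappa^{2/N}$ and $\D(\phi_1^\kappa,\phi_2^\kappa)\sim\kappa^{-(N-2)/N}$, so segregation holds only in the $o(1)$ sense, the claimed rate $o(\kappa^{-1})$ is false, and the $\mH$-norms blow up instead of converging. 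Both your scheme and the theorem survive only if $V$ is replaced by a kernel vanishing beyond a finite range (e.g.\ compactly supported), in which case your far-apart-supports argument does prove $\mS_\infty\ne\emptyset$ and the remainder of your (and the paper's) argument goes through.
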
\vspace{3mm}

 
\begin{corollary}
\label{vie} 
Under the assumptions of Theorem \ref{Gsspatseg}, the limiting configuration 
satisfies the following set of uncoupled variational inequalities,
\begin{equation} 
\label{varineqqq} 
-\Delta \phi_i^\infty
+V_i(x)\phi_i^\infty
+\theta_{i}\left(V\ast{|\phi_i^\infty|}^2\right)\phi_i^\infty 
\leq \mu_i^\infty\phi_i^\infty,  
\end{equation} 
where  
$N_i\mu_i^\infty
=\mE_i(\phi_i^\infty)
+\frac{\theta_{i}}{2}\,\D(\phi_i^\infty,\phi_i^\infty)$ and  $i=1,2$. 
\end{corollary}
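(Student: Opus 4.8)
The plan is to obtain \eqref{varineqqq} by passing to the limit $\kappa\to\infty$ in the Euler--Lagrange system satisfied by the minimizers at finite coupling. By Proposition~\ref{MinimizerExistence} each positive minimizer $(\phi_1^\kappa,\phi_2^\kappa)\in\mS$ is a constrained critical point of $\mE_\kappa$ on $\mS$ and hence, by the Lagrange multiplier rule (as noted after~\eqref{e-ith}), a weak solution of~\eqref{systemHTGen} for suitable multipliers $\mu_1^\kappa,\mu_2^\kappa\in\R$. Writing the $i$-th equation as
\[
-\Delta\phi_i^\kappa+V_i(x)\phi_i^\kappa+\theta_i\bigl(V\ast|\phi_i^\kappa|^2\bigr)\phi_i^\kappa
=\mu_i^\kappa\phi_i^\kappa-\kappa\bigl(V\ast|\phi_j^\kappa|^2\bigr)\phi_i^\kappa,\qquad j\neq i,
\]
and observing that the coupling term on the right is nonnegative (since $\kappa>0$, $V>0$, and $\phi_i^\kappa>0$), I would first record the one-sided bound
\[
-\Delta\phi_i^\kappa+V_i(x)\phi_i^\kappa+\theta_i\bigl(V\ast|\phi_i^\kappa|^2\bigr)\phi_i^\kappa\le\mu_i^\kappa\phi_i^\kappa,
\]
valid weakly against every nonnegative test function and \emph{uniformly} in $\kappa$. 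This is the finite-$\kappa$ analogue of~\eqref{varineqqq}.

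Next I would identify the multipliers and their limit. Pairing the full $i$-th equation in~\eqref{systemHTGen} with $\phi_i^\kappa$ and using $\|\phi_i^\kappa\|_{L^2}^2=N_i$ gives, with $\mE_i$ as in~\eqref{e-ith},
\[
N_i\,\mu_i^\kappa=\mE_i(\phi_i^\kappa)+\frac{\theta_i}{2}\,\D(\phi_i^\kappa,\phi_i^\kappa)+\kappa\,\D(\phi_1^\kappa,\phi_2^\kappa).
\]
By Theorem~\ref{Gsspatseg} the minimizers converge strongly in $\mH$ to $(\phi_1^\infty,\phi_2^\infty)$, so each term of $\mE_i(\phi_i^\kappa)$ and the self-interaction $\D(\phi_i^\kappa,\phi_i^\kappa)$ converge to their values at $\phi_i^\infty$ (the Coulomb functional being continuous on $H^1$ by the bound of Lemma~\ref{finitEn}). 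Crucially, the last term is governed by the refined decay $\D(\phi_1^\kappa,\phi_2^\kappa)=o(\kappa^{-1})$ from Theorem~\ref{Gsspatseg}, whence $\kappa\,\D(\phi_1^\kappa,\phi_2^\kappa)\to0$. Consequently $\mu_i^\kappa\to\mu_i^\infty$ with $N_i\mu_i^\infty=\mE_i(\phi_i^\infty)+\frac{\theta_i}{2}\D(\phi_i^\infty,\phi_i^\infty)$, which is precisely the asserted formula for the limiting multiplier.

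Finally I would pass to the limit in the weak inequality. Fixing a nonnegative $\psi\in H^1(\R^N)$ with $\irn V_i(x)|\psi|^2\,{\rm d}x<\infty$, the finite-$\kappa$ bound reads
\[
\irn\nabla\phi_i^\kappa\cdot\nabla\psi+\irn V_i\,\phi_i^\kappa\,\psi+\theta_i\irn\bigl(V\ast|\phi_i^\kappa|^2\bigr)\phi_i^\kappa\,\psi\le\mu_i^\kappa\irn\phi_i^\kappa\,\psi.
\]
Strong $\mH$-convergence lets the gradient and potential terms pass to the limit, the cubic nonlocal term is handled by the estimate explained below, and the right-hand side converges since $\mu_i^\kappa\to\mu_i^\infty$ and $\phi_i^\kappa\to\phi_i^\infty$ in $L^2$. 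As inequalities are stable under limits, this yields~\eqref{varineqqq} tested against $\psi$, i.e. the variational inequality for $\phi_i^\infty$.

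The main obstacle I anticipate is the passage to the limit in the cubic nonlocal term $\irn(V\ast|\phi_i^\kappa|^2)\phi_i^\kappa\,\psi$. I would treat it exactly as in Lemma~\ref{finitEn} and Remark~\ref{weakcontin}: estimate the difference by the Hardy--Littlewood--Sobolev and H\"older inequalities in $L^{4N/(N+2)}$, and then invoke the strong $L^{4N/(N+2)}$-convergence granted by the compact embedding of Lemma~\ref{compactnessEmb} (equivalently, by strong $\mH$-convergence). I note that the argument never uses $\D(\phi_1^\infty,\phi_2^\infty)=0$ directly; the entire effect of segregation enters solely through the sharp decay $\kappa\,\D(\phi_1^\kappa,\phi_2^\kappa)\to0$, which simultaneously removes the coupling term and pins down the limiting multiplier.
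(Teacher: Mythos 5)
Your proposal is correct and follows essentially the same route as the paper's proof: drop the nonnegative coupling term $\kappa\left(V\ast|\phi_j^\kappa|^2\right)\phi_i^\kappa$ to get a $\kappa$-uniform one-sided weak inequality against nonnegative test functions, identify $\mu_i^\kappa$ by pairing the equation with $\phi_i^\kappa$, and pass to the limit using the strong $\mH$-convergence and $\kappa\,\D(\phi_1^\kappa,\phi_2^\kappa)\to 0$ from Theorem~\ref{Gsspatseg}. Your only (harmless) refinements are obtaining full convergence of the multipliers directly from term-wise limits, where the paper first proves $\sup_{\kappa\ge1}\mu_i^\kappa<\infty$ and extracts a subsequence, and spelling out the limit passage in the cubic nonlocal term, which the paper leaves implicit.
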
\vspace{1mm}


\begin{remark} 
Although we stated Theorem~\ref{Gsspatseg} for minimizers of the Hartree 
functional in $\R^N$ with 
$3\le N\le 6$ containing the Coulomb energy functional $\D$, it also holds
for minimizers of the Hartree functional in $\R^N$ with $0<\lambda<\min\{4,N\}$ 
containing instead $\D_\lambda$ from~\eqref{def:Dlambda}.  This corresponds to 
the system
\begin{equation} 
\label{systemHTGenBis} 
\begin{cases} 
-\Delta \phi_1
+V_1(x)\phi_1
+\theta_{1}\left(V_\lambda\ast|\phi_1|^2\right)\phi_1
+\kappa\left(V_\lambda\ast|\phi_2|^2\right)\phi_1
=\mu_1\phi_1,\\ 
\noalign{\vskip6pt} 
-\Delta \phi_2
+V_2(x)\phi_2
+\theta_{2}\left(V_\lambda\ast|\phi_2|^2\right)\phi_2
+\kappa\left(V_\lambda\ast|\phi_1|^2\right)\phi_2
=\mu_2\phi_2,  \\ 
\noalign{\vskip6pt} 
{\|\phi_1\|}_{L^2}^2=N_1,\\
\noalign{\vskip6pt}
{\|\phi_2\|}_{L^2}^2=N_2,
\end{cases} 
\end{equation} 
where $V_\lambda$ stems from~\eqref{def:Vlambda}. In particular, in view of the
numerical setup, the case $N=2$ and $\lambda=1$ is covered. 
\end{remark}\vspace{1mm}


\begin{proof}
Consider a sequence of minimizers $(\phi_1^\kappa,\phi_2^\kappa)\in\mS$ 
for $\kappa\to\infty$  whose existence is assured by 
Proposition~\ref{MinimizerExistence}. 
Note first that, in the light of Definition~\ref{gse}, the sequence of 
corresponding ground state 
energies $(E_\kappa)$ is uniformly bounded because
\begin{eqnarray}
\label{gse:bound}
\hspace{10mm}E_\kappa
=\!\!\!\inf_{(\phi_1,\phi_2)\in\,\mS}\mE_\kappa(\phi_1,\phi_2)
\le \!\!\! \inf_{(\phi_1,\phi_2)\in\,\mS_\infty}\mE_\kappa(\phi_1,\phi_2)
=\!\!\!\inf_{(\phi_1,\phi_2)\in\,\mS_\infty}\mE_\infty(\phi_1,\phi_2)=E_\infty.
\end{eqnarray}
In particular, due to~\eqref{EbN} and the definition of a minimizer, the 
sequence $(\phi_1^\kappa,\phi_2^\kappa)$ is uniformly bounded in $\mH$ 
with respect to $\kappa$,
\begin{equation*}
{\|(\phi_1^\kappa,\phi_2^\kappa)\|}_{\mH}^2\le \mE_\kappa(\phi_1^\kappa,
\phi_2^\kappa) 
=E_\kappa\le E_\infty.
\end{equation*}
Hence, since $\mH$ is weakly sequentially compact, there exists a pair 
$(\phi_1^\infty,\phi_2^\infty)\in \mH$  and a subsequence 
of $(\phi_1^\kappa,\phi_2^\kappa)$, again denoted by 
$(\phi_1^\kappa,\phi_2^\kappa)$ which, for $\kappa\to\infty$,
 converges weakly in $\mH$ to $(\phi_1^\infty,\phi_2^\infty)$.
Next, we want to show that $(\phi_1^\infty,\phi_2^\infty)\in\mS_\infty$. 
Since $(\phi_1^\kappa,\phi_2^\kappa)\in\mS$ and the embedding 
$\mH\hookrightarrow L^2(\R^N)\oplus L^2(\R^N)$ is compact, we have, for $i=1,2$,
\begin{eqnarray*}
{\|\phi_i^\infty\|}_{L^2}^2=N_i.
\end{eqnarray*}
Hence, $(\phi_1^\infty,\phi_2^\infty)\in\mS$. Moreover, again due 
\eqref{EbN} and~\eqref{gse:bound}, we have
\begin{eqnarray}
\label{kappaD}
\kappa \,\D(\phi_1^\kappa,\phi_2^\kappa) 
\le \mE_\kappa(\phi_1^\kappa,\phi_2^\kappa) 
\le E_\infty,
\end{eqnarray}
which implies that the sequence $(\phi_1^\kappa,\phi_2^\kappa)$ 
is phase segregating for $\kappa\to\infty$,
\begin{equation*} 
\D(\phi_1^\kappa,\phi_2^\kappa)=O(\kappa^{-1}). 
\end{equation*}
Also, since we know that the Coulomb energy $\D$ is weakly
continuous on $\mS$, we get
\begin{equation*}
\D(\phi_1^\infty,\phi_2^\infty)=0,
\end{equation*}
and, therefore, $(\phi_1^\infty,\phi_2^\infty)\in\mS_\infty$. In order 
to prove that $(\phi_1^\infty,\phi_2^\infty)$ is a 
minimizer of $\mE_\infty$  and that $(\phi_1^\kappa,\phi_2^\kappa)$ converges 
strongly in $\mH$ to $(\phi_1^\infty,\phi_2^\infty)$, we next show that 
$\D(\phi_1^\kappa,\phi_2^\kappa)=o(\kappa^{-1})$. To this end, consider
the sequence $\kappa \,\D(\phi_1^\kappa,\phi_2^\kappa)$ which is bounded due 
to \eqref{kappaD}, and pick a convergent subsequence, 
denoted by ${\kappa_n} \,\D(\phi_1^{\kappa_n},\phi_2^{\kappa_n})$. Then, 
using that $(\phi_1^\infty,\phi_2^\infty)\in\mS_\infty$, the weak lower
semicontinuity of the decoupled energy functional $\mE_\infty$ on $\mS$, and 
\eqref{gse:bound},
we get
\begin{align}
\label{vanishing}
\hspace{13mm}
\mE_\infty(\phi_1^\infty,\phi_2^\infty)+\lim_{n\to\infty}{\kappa_n} \,
\D(\phi_1^{\kappa_n},\phi_2^{\kappa_n})
&\le\liminf_{n\to\infty}\mE_{\kappa_n}(\phi_1^{\kappa_n},\phi_2^{\kappa_n})\\
&\le E_\infty
\le \mE_\infty(\phi_1^\infty,\phi_2^\infty), \notag
\end{align}
with the consequence that ${\kappa_n} \,\D(\phi_1^{\kappa_n},\phi_2^{\kappa_n})
=0$ as $n\to\infty$. 
Therefore, since this holds for all convergent subsequences
of $\kappa \,\D(\phi_1^\kappa,\phi_2^\kappa)$, we arrive at
\begin{eqnarray}
\label{okappa}
\D(\phi_1^\kappa,\phi_2^\kappa)=o(\kappa^{-1}).
\end{eqnarray}
This implies, on one hand, that  $(\phi_1^\kappa,\phi_2^\kappa)$ converges 
strongly in 
$\mH$ to $(\phi_1^\infty,\phi_2^\infty)$ since from 
$E_k\le \mE_\infty(\phi_1^\infty,\phi_2^\infty)$, \eqref{okappa}, 
and the weak continuity of $\D(\phi_i^\kappa,\phi_i^\kappa)$, we get
\begin{eqnarray*}
\limsup_{\kappa\to\infty} {\|(\phi_1^\kappa,\phi_2^\kappa)\|}_{\mH}^2
\le {\|(\phi_1^\infty,\phi_2^\infty)\|}_{\mH}^2.
\end{eqnarray*}
On the other hand, using~\eqref{vanishing} and~\eqref{okappa}, we see that 
$(\phi_1^\infty,\phi_2^\infty)$ is a minimizer of $\mE_\infty$, that is $E_\infty=\mE_\infty(\phi_1^\infty,\phi_2^\infty)$.
Finally, we note that, again due to~\eqref{vanishing}, we have $E_\kappa\to E_\infty$ as $\kappa\to\infty$. 
This brings the proof of Theorem~\ref{Gsspatseg} to an end.
\end{proof}\vspace{2mm}


Finally, we prove the assertion of Corollary~\ref{vie}.
\begin{proof}
Observe that, by virtue of  
\begin{equation} 
\label{eig-systemHTGen}
\mu_i^\kappa
=\frac{1}{N_i}\Big\{\mE_i(\phi_i^\kappa)+\frac{\theta_{i}}{2}\, 
\D(\phi_i^\kappa,\phi_i^\kappa) +\kappa\, \D(\phi_1^\kappa,\phi_2^\kappa)
\Big\}, 
\end{equation} 
the inequality \eqref{kappaD} and $\mE_i(\phi_i^\kappa)\leq E_\infty$, we get 
\begin{equation*} 
\sup_{\kappa\geq 1}\mu_i^\kappa<\infty, 
\end{equation*} 
where $\mu_i^\kappa$ denotes the nonlinear eigenvalue of the minimizer
$\phi_i^\kappa$ as the weak nonlinear ground state in the corresponding
nonlinear eigenvalue system~\eqref{systemHTGen}. Then, up to a subsequence, $\mu_i^\kappa\to \mu_i^\infty$ as $\kappa\to\infty$. Testing 
the equations of~\eqref{systemHTGen} with arbitrary nonnegative functions $\eta$ of compact support, we get, recalling that $\phi_i\geq 0$,
\begin{align*} 
\irn\nabla \phi_i^\kappa(x)\cdot\nabla\eta(x) \,{\rm d}x 
&+\irn V_i(x)\,\phi_i^\kappa(x)\,\eta(x)\,{\rm d}x\\ 
&+\theta_{i}\irn\irn \frac{\phi_i^\kappa(y)^2\,\phi_i^\kappa(x)\,\eta(x) }{|x-y|^{N-2}}\,\,{\rm d}x\,{\rm d}y\\ 
&\leq \mu_i^\kappa\irn \phi_i^\kappa(x)\,\eta(x)\,{\rm d}x. 
\end{align*} 
Hence, letting $\kappa\to\infty$, it turns out that $\phi_i^\infty$ 
satisfies the variational inequality~\eqref{varineqqq}. 
Finally, the strong convergence and~\eqref{eig-systemHTGen} yields, 
for $i=1,2$, 
$$
N_i\,\mu_i^\infty
=\mE_i(\phi_i^\infty)
+\frac{\theta_{i}}{2}\,\D(\phi_i^\infty,\phi_i^\infty).
$$ 
This ends the proof of Corollary~\ref{vie}. 
\end{proof} 

 
\medskip 
\section{Numerical approach} 

 
\subsection{Galerkin approximation of the nonlinear eigenvalue 
system} 
 
In order to carry out the numerical simulation, we treat the  
Hartree system in the plane from \eqref{systemHTGenBis} with $N=2$  
and $\lambda=1$ in the framework of the following finite element  
approximation.   
As physical subdomain of $\R^2$, we choose the open square  
\begin{eqnarray}
\label{def:Omega}
\Omega=(0,D)^{\times 2}
\end{eqnarray}
 with $D>0$ whose closure is the union  
of the $(m-1)^2$ congruent closed subsquares generated by dividing  
each side of $\Omega$ equidistantly into $m-1$ intervals. Let us  
denote by $M=(m-2)^2$ the total number of interior vertices of  
this  
lattice and by $h=D/(m-1)$ the lattice spacing.\footnote{\label{foot:bijection}As  
bijection from the one-dimensional to the two-dimensional  
lattice numbering, we may use the mapping  
$\tau:\{0,...,m-1\}^{\times 2} 
\to \{0,...,m^2-1\}$ with $j=\tau(m_1,m_2):=m_1+m_2m$.}  
Moreover, let us   
choose the  Galerkin space $S_h$ to be spanned by the bilinear  
Lagrange rectangle finite elements  
$\varphi_j\in C(\overline{\Omega})$.\footnote{The reference  
basis function $\varphi_0:\overline{\Omega}\to [0,\infty)$  
is defined on  
its support $[0,2h]^{\times 2}$ by 
\begin{eqnarray} 
\label{def:blfe} 
\varphi_0(x,y) 
:=\frac{1}{h^2}\left\{ 
\begin{array}{rl} 
xy,           & \mbox{if } (x,y)\in [0,h]^{\times 2},\\ 
(2h-x)y,      & \mbox{if } (x,y)\in [h,2h]\times [0,h],\\ 
(2h-x)(2h-y), & \mbox{if } (x,y)\in [h,2h]^{\times 2},\\ 
x(2h-y),      & \mbox{if } (x,y)\in [0,h]\times [h,2h], 
\end{array} 
\right. 
\end{eqnarray} 
see Figure \ref{fig:refel}. The functions $\varphi_j$ are then defined to 
be of the form \eqref{def:blfe} having their support translated by 
$(m_1 h,m_2 h)$ with $m_1,m_2=0,...,m-3$.  
} 
\begin{figure}[h!] 
\begin{center} 
\includegraphics[width=4cm,height=4.5cm]{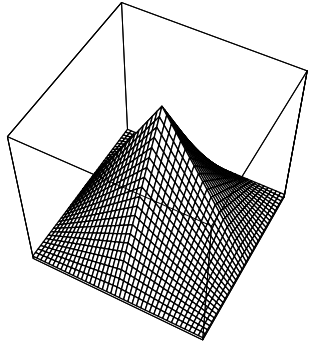} 
\end{center} 
\caption{$\varphi_0(x,y)$ on its support $[0,2h]^{\times 2}$ with  
maximum at vertex $(h,h)$.} 
\label{fig:refel} 
\end{figure}
 
Hence, with this choice, we have 
$$
S_h\subset C(\overline{\Omega})\cap  
H_0^1(\Omega)
$$ 
and $\dim S_h=(m-2)^2$.
The Hartree system  
\eqref{systemHTGenBis} in its weak finite element approximation form  
reads, for all $\varphi\in S_h$,\footnote{In this section, $(\cdot,\cdot)$ and $\|\cdot\|$ stand for  the $L^ 2(\Omega)$-scalar product and $L^ 2(\Omega)$-norm,  
respectively. Moreover, the convolution on the  
finite domain $\Omega$ is defined, for any $(x,y)\in\Omega$, by
$(V\ast \phi)(x,y):= 
\int_\Omega V(x-x',y-y')\,\phi(x',y')  \,{\rm d}x'\,{\rm d}y'$.}
\begin{equation} 
\label{FEsystem} 
\begin{cases} 
(\nabla\varphi,\nabla\phi_1) 
+(\varphi,V_1\phi_1) 
+(\varphi,(V\ast [\vartheta_1|\phi_1|^2+\kappa|\phi_2|^2])\phi_1) 
=\mu_{1,0}  \left(\varphi,\phi_1\right),\\ 
(\nabla\varphi,\nabla\phi_2) 
+(\varphi,V_2\phi_2) 
+(\varphi,(V\ast[\vartheta_2|\phi_2|^2+\kappa|\phi_1|^2])\phi_2) 
=\mu_{2,0} (\varphi,\phi_2),\\ 
\|\phi_1\|^2_{L^2}=N_1,\\ 
\|\phi_2\|^2_{L^2}=N_2. 
\end{cases} 
\end{equation} 
If we expand   
$\phi_\alpha\in S_h$  with expansion coefficients $z_\alpha= 
(z_{\alpha,1},...,z_{\alpha,M})\in\C^M$ 
w.r.t. the finite element basis $\{\varphi_j\}_{j=1}^M$ of $S_h$, 
\begin{eqnarray} 
\label{expansion} 
\phi_\alpha=\sum_{j=1}^{M}z_{\alpha,j}\,\varphi_j, 
\end{eqnarray} 
and if we plug this expansion together with $\varphi=\varphi_i$, 
$i=1,...,M$, into \eqref{FEsystem}, we get the following  
coupled matrix system on $\C^M$,\footnote{ 
$|z|_2^2:=\langle z,z\rangle_2$ denotes the $L^2$-norm 
on $\C^M$, where $\langle z,w\rangle:=\sum_{j=1}^M 
\overline{z}_jw_j$ and $\langle z,w\rangle_2:= 
\langle z,Aw\rangle$ are the Euclidean and the  $L^2$-scalar  
product on $\C^M$, respectively.  For $\phi_\alpha$ from  
\eqref{expansion}, we have $\|\phi_\alpha\|^2= 
\langle z_\alpha,Az_\alpha\rangle=|z_\alpha|_2^2=N_\alpha$  
for  
$\alpha=1,2$.} 
\begin{equation} 
\begin{cases} 
\label{Qsystem} 
Q_1[z_1,z_2]z_1=\mu_{0,1}z_1,\\ 
Q_2[z_1,z_2]z_2=\mu_{0,2}z_2,\\ 
|z_1|_2^2=N_1,\\ 
|z_2|_2^2=N_2. 
\end{cases} 
\end{equation} 
Here, the matrix-valued mappings $Q_1,Q_2: \C^M\times\C^M\to\C^{M\times M}$  
 are defined by 
\begin{eqnarray*} 
Q_1[z_1,z_2]&:=&A^{-1}(B+Y_1+\vartheta_1\, G[z_1]+\kappa\,  G[z_2]),\\ 
Q_2[z_1,z_2]&:=&A^{-1}(B+Y_2+\vartheta_2\, G[z_2]+\kappa\,  G[z_1]), 
\end{eqnarray*} 
and $A\in\C^{M\times M}$ is the mass matrix, $B\in\C^{M\times M}$ the  
stiffness matrix, and $Y_\alpha\in\C^{M\times M}$ the matrices generated by  
the external potentials $V_\alpha$,  
\begin{eqnarray} 
\label{def:A,B,Y} 
A_{ij}:=(\varphi_i,\varphi_j),\qquad 
B_{ij}:=(\nabla\varphi_i,\nabla\varphi_j),\qquad 
(Y_\alpha)_{ij}:=(\varphi_i,V_\alpha\varphi_j). 
\end{eqnarray} 
Moreover, the matrix-valued mapping $G:\C^{M}\to \C^{M\times M}$ is defined  
on $w=(w_1,...,w_{M})\in\C^{M}$ by 
\begin{eqnarray*} 
G[w]_{ij}:=\left(\varphi_i,g\big[\sum_{k=1}^{M}w_k\,\varphi_k\big]\varphi_j\right) 
= \sum_{k,l=1}^{M}\overline{w}_k w_l \,V_{iklj}, 
\end{eqnarray*} 
where the function $g$ and the Hartree  
convolution term $V_{iklj}$ are defined by  
\begin{eqnarray} 
\label{def:g,V} 
g[\phi]:=V\ast|\phi|^2,\qquad 
V_{iklj}:=(\varphi_i,V\ast(\overline{\varphi}_k\varphi_l)\varphi_j). 
\end{eqnarray}

\brm 
\label{rem:MassLumping} 
We avoid the inversion of the mass matrix $A$ and simplify 
the evaluation of the double integral in the Hartree convolution term  
\eqref{def:g,V} by approximating the integrals over $\Omega$ by the  
standard mass lumping quadrature procedure.  
\erm\vspace{1mm} 

 
In order to simplify the eigenvalue system \eqref{Qsystem}  
with the help of Remark \ref{rem:MassLumping},  
let  us introduce the mappings $H_1,H_2:  
\C^M\times\C^M\to\C^{M\times M}$ defined by 
\begin{eqnarray*} 
H_1[z_1,z_2]&:=&  
\frac{1}{h^2}(B+Y_1+\vartheta_1\,{\rm diag}(G_0[z_1]) 
+\kappa\,{\rm diag}(G_0[z_2])),\\ 
H_2[z_1,z_2]&:=&  
\frac{1}{h^2}(B+Y_2 
+\vartheta_2\,{\rm diag}(G_0[z_2]) 
+\kappa\,{\rm diag}(G_0[z_1])), 
\end{eqnarray*} 
where ${\rm diag}:\C^M\to\C^{M\times M}$ is defined to be the matrix-valued  
mapping on $w=(w_1,...,w_M)\in\C^M$ defined by  ${\rm diag}(w)_{ij}:= 
\delta_{ij}w_j$ for all $i,j=1,...,M$, and $G_0:\C^M\to\C^M$  
is defined  by 
\begin{eqnarray*} 
G_0[w]_i:=h^4\sum_{j=1}^M\,|w_j|^2\,V(h(\tau^{-1}(i)-\tau^{-1}(j))), 
\end{eqnarray*} 
where $\tau$ is the grid numbering bijection from footnote 
\ref{foot:bijection}. Hence,  
Remark \ref{rem:MassLumping} amounts to the replacement  
$G[z]\mapsto {\rm diag}(G_0[z])$ and we get approximated 
Hartree system 
\begin{equation} 
\begin{cases} 
\label{Hsystem} 
H_1[z_1,z_2]z_1=\mu_{0,1}z_1,\\ 
H_2[z_1,z_2]z_2=\mu_{0,2}z_2,\\ 
|z_1|_2^2=N_1,\\ 
|z_2|_2^2=N_2. 
\end{cases} 
\end{equation}\vspace{1mm} 
 

\subsection{Algorithms} 
 
In order to solve the nonlinear coupled eigenvalue system  
\eqref{Hsystem}, we make use of the method of  
successive substitution\footnote{Also called nonlinear Richardson  
iteration or Picard iteration.} whose  fixed-point map  
is constructed with the 
help of the power method used for the solution of the corresponding  
linearized problem. In the following, we briefly describe the basic 
ideas of these algorithms.\\

 
{\sc Method of successive substitution (MSS)} \\ 
Let $\mathcal{M}$ be the compact set  
$$ 
\mathcal{M}:=\{[z_1,z_2]\in \C^M\times\C^M\,|\, 
|z_1|_2^2=N_1,\,|z_2|_2^2=N_2\}. 
$$ 
The MSS is an iterative method of the form  
\begin{eqnarray} 
\label{def:F} 
[z_1^{(n+1)},z_2^{(n+1)}]=F[z_1^{(n)},z_2^{(n)}], 
\end{eqnarray} 
where the fixed point map 
$F:\mathcal{M}\to \mathcal{M}$ is constructed as follows. 
Given an approximate nonlinear  
system ground state  
$[z_1^{(n)},z_2^{(n)}]\in \mathcal{M}$  at iteration level  
$n\in\N$, the approximate nonlinear system ground state   
$[z_1^{(n+1)},z_2^{(n+1)}]\in \mathcal{M}$ at iteration level  
$n+1$ is defined to be the linear system ground state of the  
linearized eigenvalue system 
\begin{equation} 
\begin{cases} 
\label{Hlinsystem} 
H_1[z_1^{(n)},z_2^{(n)}]z_1^{(n+1)} 
=\epsilon_{0,1}^{(n+1)} z_1^{(n+1)},\\  
H_2[z_1^{(n)},z_2^{(n)}]z_2^{(n+1)} 
=\epsilon_{0,2}^{(n+1)} z_2^{(n+1)},\\ 
|z_1^{(n+1)}|_2^2=N_1,\\ 
|z_2^{(n+1)}|_2^2=N_2.\\ 
\end{cases} 
\end{equation} 


\brm 
Here and in the following, we make the assumption that  
$H_\alpha[z_1^{(n)},z_2^{(n)}]$ has a unique linear ground state. 
E.g., using perturbation theory in the regime of small nonlinear  
couplings, this holds as soon as the linear operator 
$H_\alpha[0,0]$ has a nondegenerate ground state energy.  
\erm 


\brm 
We can write the fixed point map $F_\alpha[z_1^{(n)},z_2^{(n)}]$  
from \eqref{def:F} with the help of the linear ground state  
projection 
$$ 
P_\alpha[z_1,z_2] 
=-\frac{1}{2\pi {\rm i}}\oint_{\Gamma_\alpha[z_1,z_2]} 
(H_\alpha[z_1,z_2]-\zeta)^{-1}\,\, {\rm d}\zeta, 
$$ 
where $\Gamma_\alpha[z_1,z_2]$ is a path which encircles the  
linear ground state energy of $H_\alpha[z_1,z_2]$ in  
the positive  
direction 
and no other point of the spectrum of $H_\alpha[z_1,z_2]$.  
The map  
$F$ can now be written as 
\begin{eqnarray} 
\label{F:projector} 
F_\alpha[z_1^{(n)},z_2^{(n)}] 
=\sqrt{N_\alpha}\,\, 
\frac{P_\alpha[z_1^{(n)},z_2^{(n)}]z_\alpha^{(n)}} 
{|P_\alpha[z_1^{(n)},z_2^{(n)}]z_\alpha^{(n)}|_2}\,. 
\end{eqnarray} 
Since $H_\alpha[\cdot,\cdot]$ is Lipschitz continuous on $\mathcal{M}$,  
the map $F$ 
has a not necessarily unique fixed point due to Schauder's fixed point  
theorem. 
\erm 
 

 

The system \eqref{Hlinsystem} being 
not only linearized but also decoupled, we can solve  
the two linear eigenvalue problems separately. In order to  
approximately determine the ground states of the linear eigenvalue  
problems, we make use of the power method which works as  
follows.\\ 
 
 
{\sc Power method (PM)} \\ 
The PM computes the eigenvector of  
$H_\alpha[z_1^{(n)},z_2^{(n)}]$ whose eigenvalue has largest  
modulus amongst all the eigenvalues whose eigenvectors 
 appear in the eigenvector expansion of the starting approximation. To access the ground state of $H_\alpha[z_1^{(n)},z_2^{(n)}]$, 
we apply the following spectral shift\footnote{ 
For $A=[a_{ij}]\in\C^{M\times M}$, we define the $\ell^1$- 
matrix norm by ${|A|}_1:=\sum_{i,j=1}^M|a_{ij}|$.} 
$$
s_\alpha^{(n)}:={|H_\alpha[z_1^{(n)},z_2^{(n)}]|}_1+1.
$$
 Moreover, we  
define the  
shifted operator by  
$$ 
\widehat{H}_\alpha[z_1^{(n)},z_2^{(n)}] 
:=H_\alpha[z_1^{(n)},z_2^{(n)}]-s_\alpha^{(n)}. 
$$ 
Now,  the $p$-th iterate of  
the PM  iteration is defined by\footnote{$|z|:=\langle z,z\rangle^{1/2}$  
denotes the Euclidean norm of $z\in\C^M$.} 
\begin{eqnarray} 
\label{def:PM} 
z_\alpha^{(n+1),p} 
&:=&\frac{{\widehat{H}_\alpha[z_1^{(n)},z_2^{(n)}]}^p z_\alpha^{(n)}} 
{{|\widehat{H}_\alpha[z_1^{(n)},z_2^{(n)}]}^p z_\alpha^{(n)}|}. 
\end{eqnarray} 

\brm 
Since $\widehat{H}_\alpha[z_1^{(n)},z_2^{(n)}]$ is real symmetric,  
the spectral theorem implies the existence  
of an orthonormal basis of $\C^M$ of eigenvectors  
$\{w_{\alpha,k}\}_{k=0}^{M-1}$ of  
$\widehat{H}_\alpha[z_1^{(n)},z_2^{(n)}]$.\footnote{We suppress the  
superscript 
$n$ in the eigenvectors, eigenvalues, and in the expansion coefficients,  
since  
the PM iteration acts at a fixed $n$. Moreover, the numbering  
starts at  
$0$ being the index of the ground state.}  Moreover, since the spectral  
radius  
of $\widehat{H}_\alpha[z_1^{(n)},z_2^{(n)}]$ is smaller than  
$s_\alpha^{(n)}$, we have for all eigenvalues of the shifted operator  
$\hat{\epsilon}_{\alpha,k} 
\in{\rm spec}(H_\alpha[z_1^{(n)},z_2^{(n)}])-s_\alpha^{(n)}$ that 
$$ 
-2s_\alpha^{(n)} 
< \hat{\epsilon}_{\alpha,0} 
< \hat{\epsilon}_{\alpha,1} 
\le ... 
\le \hat{\epsilon}_{\alpha,M-1} 
<0.
$$ 
Let us expand $z_\alpha^{(n)}$ w.r.t. the orthonormal  basis   
$\{w_{\alpha,k}\}_{k=0}^{M-1}$ as  
$$ 
z_\alpha^{(n)}=\sum_{k=0}^{M-1} 
\xi_{\alpha,k}w_{\alpha,k}, 
$$  
use that $\widehat{H}_\alpha[z_1^{(n)},z_2^{(n)}] w_{\alpha,k} 
=\hat{\epsilon}_{\alpha,k}\,w_{\alpha,k}$, and divide the  
numerator and the denominator in \eqref{def:PM} 
by $|\hat{\epsilon}_{\alpha,0}|^p$. Moreover, let us assume 
that  $\xi_{\alpha,0}\neq 0$. Then, in the large $p$ limit, 
$(-1)^p z_\alpha^{(n+1),p}$ converges to a multiple of 
the ground state 
of $H_\alpha[z_1^{(n)},z_2^{(n)}]$,  
\begin{eqnarray} 
\label{PM:convergence} 
z_\alpha^{(n+1),p} 
=(-1)^p\frac{\xi_{0,\alpha}}{{\big|\xi_{0,\alpha}\big|}}\, 
w_{0,\alpha}+o(1). 
\end{eqnarray} 
\erm

\brm 
Using formula \eqref{PM:convergence}, the fixed point map $F$ from 
\eqref{def:F},  
\eqref{F:projector} can also be written as 
\begin{eqnarray*} 
F_\alpha[z_1^{(n)},z_2^{(n)}] 
=\sqrt{N_\alpha}\,\lim_{p\to\infty} 
\frac{(-1)^p z_\alpha^{(n+1),p}}{{|z_\alpha^{(n+1),p}|}_2}. 
\end{eqnarray*} 
\erm 

 
{\sc Stopping criteria}\\ 
Both for the inner PM iteration and the outer MSS iteration, we use a  
relative error stopping criterion in  
the numerical computation. For the PM iteration, let us define the energy 
\begin{eqnarray} 
\label{def:LinEnergy} 
\hat{\epsilon}_{\alpha,0}^{(n+1),p} 
:=\langle z_\alpha^{(n+1),p}, 
\widehat{H}_\alpha[z_1^{(n)},z_2^{(n)}]z_\alpha^{(n+1),p}\rangle. 
\end{eqnarray} 
Then, for suitably chosen accuracy tolerance  
$\delta_{\rm PM}>0$, we stop the PM iteration  
for each component $\alpha=1,2$ as soon as 
\begin{eqnarray} 
\label{StopPM} 
\frac{|(\widehat{H}_\alpha[z_1^{(n)},z_2^{(n)}]- 
\hat{\epsilon}_{\alpha,0}^{(n+1),p})\, 
z_\alpha^{(n+1),p}|} 
{|\hat{\epsilon}_{\alpha,0}^{(n+1),p}+s_\alpha^{(n)}|} 
\le \delta_{\rm PM}. 
\end{eqnarray} 

 
\brm 
Note that the quotient \eqref{StopPM} does not depend on the shift  
$s_\alpha^{(n)}$, since the iterates $z_\alpha^{(n+1),p}$ are normalized  
w.r.t. the Euclidean norm on $\C^M$. 
\erm
 
\brm 
Clearly, the stopping criterion \eqref{StopPM} is satisfied for any  
eigenvector of $\widehat{H}_\alpha[z_1^{(n)},z_2^{(n)}]$. But as soon  
as $\xi_{\alpha,0}\neq 0$, e.g. due to finite precision arithmetic,  
the PM iterate $z_\alpha^{(n+1),p}$ converges to a multiple 
of the ground state $w_{\alpha,0}$. But note that the chosen accuracy 
may be reached before a nonvanishing $\xi_{\alpha,0}$ is generated. 
\erm 


For the MSS iteration, we implement a similar stopping criterion. To this end, 
 we define the approximate nonlinear ground state energies  
as 
\begin{eqnarray*} 
\mu_{\alpha,0}^{(n+1)} 
&:=&\frac{1}{N_\alpha}\, 
\langle z_\alpha^{(n+1)},H_\alpha[z_1^{(n+1)},z_2^{(n+1)}] 
z_\alpha^{(n+1)}\rangle_2, 
\end{eqnarray*} 
where, compared to \eqref{def:LinEnergy}, the Hartree energy $H_\alpha$ 
depends on  iteration level $n+1$ instead of level $n$. We stop the  
MSS iteration as soon as 
\begin{eqnarray*} 
\frac{{|(H_\alpha[z_1^{(n+1)},z_2^{(n+1)}]- \mu_{0,\alpha}^{(n+1)})\, 
z_\alpha^{(n+1)}|}_2} 
{|\mu_{0,\alpha}^{(n+1)}|} 
\le \delta_{\rm MSS}, 
\end{eqnarray*} 
where $\delta_{\rm MSS}>0$ is some suitably chosen accuracy tolerance.

\subsection{Phase segregation} 
 
As it has been defined above in Definition \ref{def:segregation},  
a sequence of  
nonlinear ground state solutions  
$(\phi_1^\kappa, \phi_2^\kappa)$ is phase segregating if its Coulomb energy
vanishes in the limit of large interaction strength $\kappa$, i.e.
\begin{eqnarray} 
\label{def:D} 
\D(\phi_1^\kappa, \phi_2^\kappa) 
=(\phi_1^\kappa,(V\ast|\phi_2^\kappa|^2)\phi_1^\kappa)\to 0 \hspace{3mm}
\mbox{for}\hspace{2mm}
\kappa\to\infty.
\end{eqnarray} 
Plugging the expansions \eqref{expansion} into \eqref{def:D}, we get 
$$ 
 \D\Big(\sum_{i=1}^Mz_{1,i}\,\varphi_i, \sum_{j=1}^M 
z_{2,j}\,\varphi_j\Big)=\langle z_1, G[z_2] z_1\rangle.  
$$ 
Hence, making use  
of Remark \ref{rem:MassLumping}, we define the approximated 
Coulomb energy  $\D_0:\C^M\times\C^M\to\R$ by 
\begin{eqnarray} 
\label{def:D0} 
\D_0[z_1,z_2]:=\langle z_1,{\rm diag}(G_0[z_2])z_1\rangle. 
\end{eqnarray} 
Below, we will use this approximation  in the numerical 
computation of the Coulomb energy.

\bigskip
\section{Figures} 

The numerical computations leading to the following figures visualize the
qualitative picture of the approach to the 
segregated regime. First, 
we exhibit the densities of the wave functions   
$\phi_1$ and $\phi_2$ for increasing values of the interaction strength 
$\kappa$ approaching the segregated regime. Second, we report on the  
decay of the Coulomb energy \eqref{def:D0}.\\
 
We choose the external potentials $V_\alpha$ for $\alpha=1,2$  to be isotropic 
harmonic 
 potentials,
\begin{eqnarray}
\label{def:Valpha}
V_\alpha(x,y)
=c_\alpha \left((x-a_\alpha)^2+(y-b_\alpha)^2\right),
\end{eqnarray}
and the interaction potential $V$ to be a regularized Yukawa potential,
\begin{eqnarray}
\label{def:Vnum}
V(x,y)
=\frac{{\rm e}^{-\Gamma\sqrt{x^2+y^2}}}{\sqrt{x^2+y^2}+\gamma}\,.
\end{eqnarray}

\brm 
The potential \eqref{def:Vnum} being the regularized three-dimensional Yukawa
potential, it may be argued that we consider a physically three-dimensional
system  constrained to a two-dimensional submanifold of the
three-dimensional configuration space.
\erm\vspace{1mm}

The specification of the parameters used in the simulations below
is summarized in the following table (cf. \eqref{def:Omega}, 
\eqref{FEsystem}, \eqref{def:Valpha}, and \eqref{def:Vnum}).\footnote{The code
is part of our {\sf Hartree} package written in C++.}
\vspace{3mm}

\begin{center}
\begin{tabular}{|l|l|l|l|l|l|l|l|l|l|l|l|l|}\hline
$N_1$ \T \B & 
$N_2$       & 
$a_1$       & 
$b_1$       & 
$c_1$       & 
$a_2$       & 
$b_2$       & 
$c_2$       &
$\theta_1$  &
$\theta_2$  &
$\kappa$    &
$\Gamma$    &
$\gamma$  
\\\hline
$1$ \T \B    & $1$ &
$D/2$ & $D/2$ & $10^5$ & 
$D/2$ & $D/2$ & $10^3$ & 
$0$ & $0$ & cf. below & $10^2$ & $10^{-1}$ 
\\\hline
\end{tabular}  
\end{center} 

\vspace{2mm}
\brm  
All the qualitative features of the following simulations have been tested 
for  stability in different physical and numerical parameter ranges.
\erm   

\subsection{$\kappa=0$}

For the interaction strength $\kappa=0$, the system is uncoupled and linear, 
and we find the ground state wave functions of the harmonic oscillator. The 
supports are fully overlapping, see Figure \ref{fig:seg0}.\footnote{All the 
figures have been  produced  with the help of {\sf gnuplot}.}  

\begin{figure}[h!]
\centering  
\includegraphics[width=6.5cm,height=4.5cm]{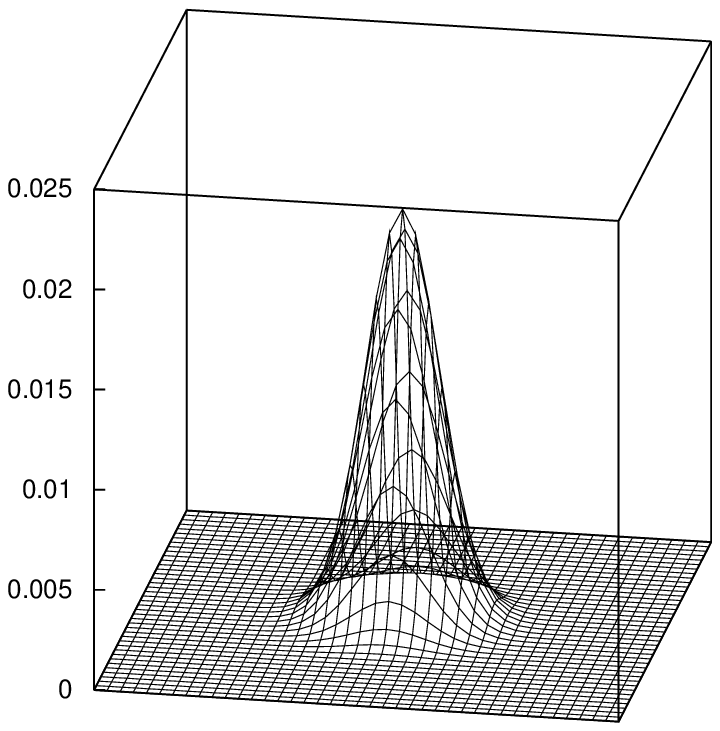}
\hspace{-1.5cm}  
\includegraphics[width=6.5cm,height=4.5cm]{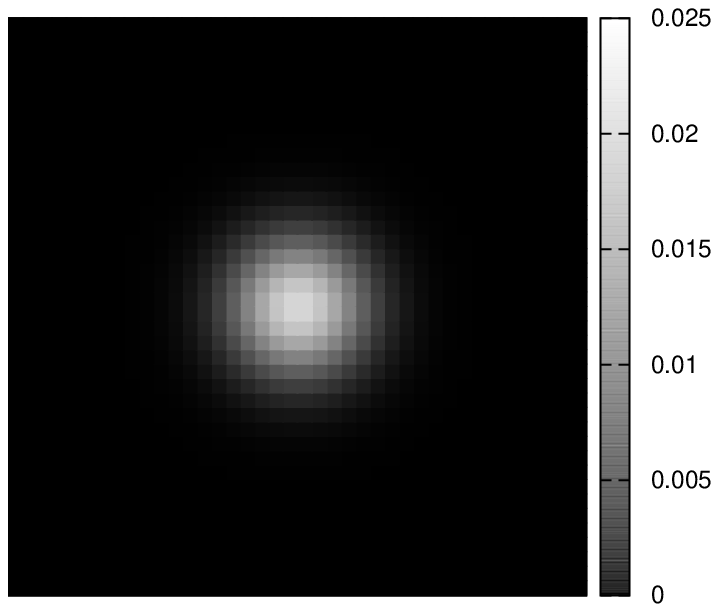}

\hspace{1cm}  
  
\vspace{-1.2cm} 

\includegraphics[width=6.5cm,height=4.5cm]{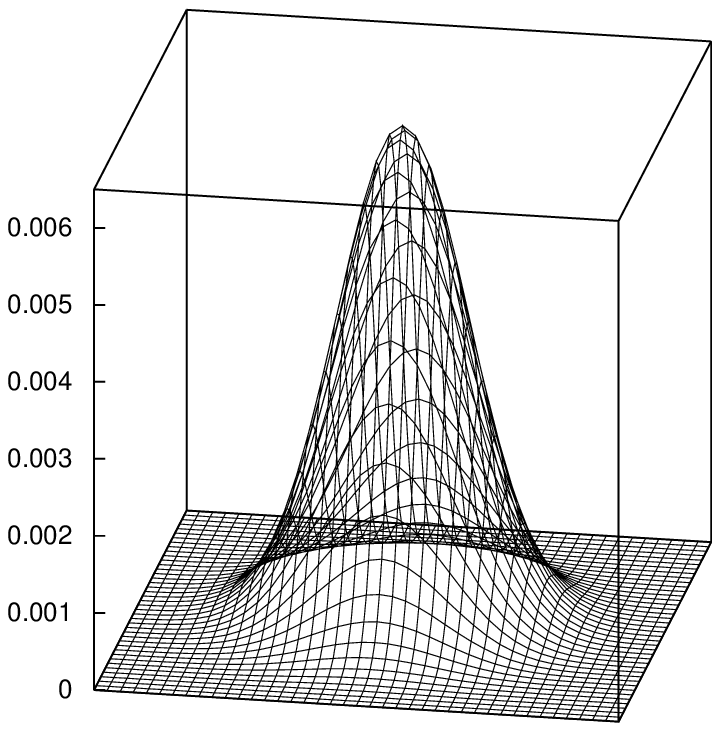}
\hspace{-1.5cm}  
\includegraphics[width=6.5cm,height=4.5cm]{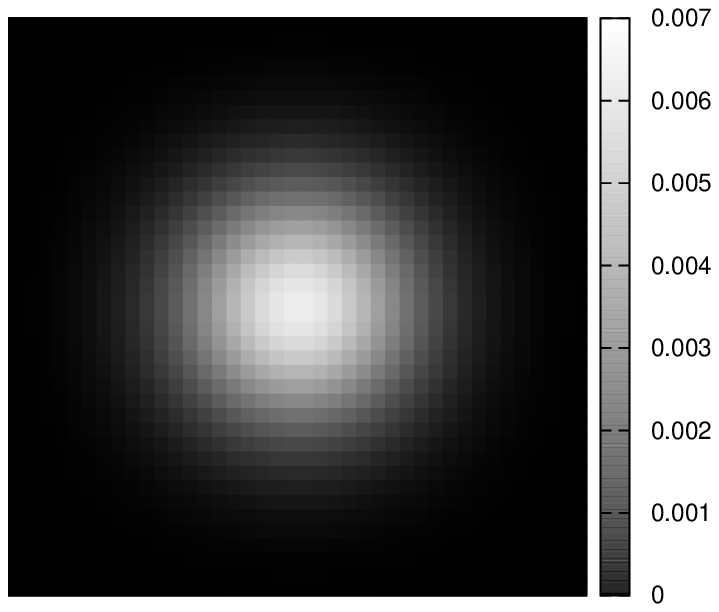}
\caption{The wave function densities $|z_\alpha|^2$ and their contours with  
$\alpha=1$ above and $\alpha=2$ below for the interaction strength $\kappa=0$.}  
\label{fig:seg0}  
\end{figure} 

\subsection{$\kappa=0.5$} 

The wave functions $\phi_1$ and $\phi_2$ start  
to feel their respective repulsion. The support of $\phi_1$ is retracting  
whereas the one of $\phi_2$ gets pushed outwards. The supports are still  
heavily overlapping, see Figure \ref{fig:seg0.5}.
\begin{figure}[h!]
\centering  
\includegraphics[width=6.5cm,height=4.5cm]{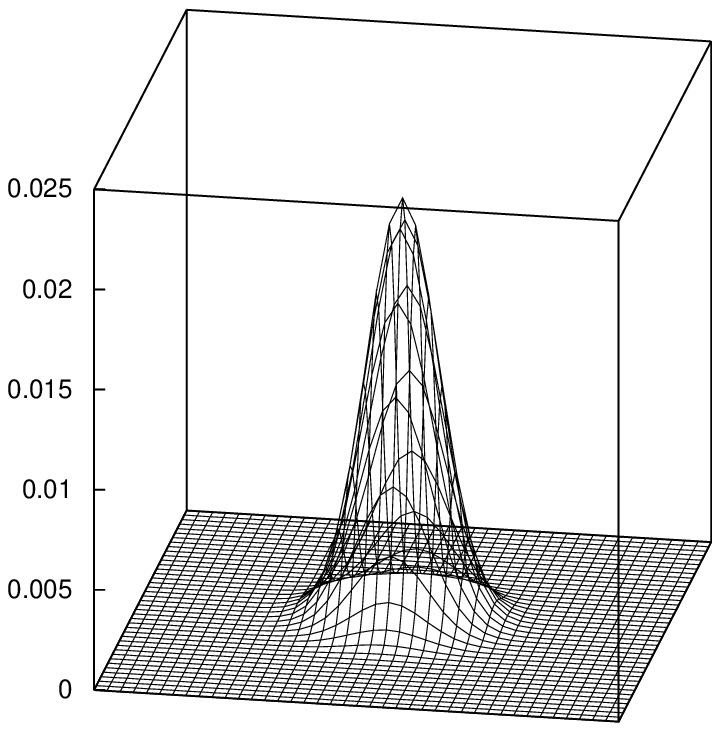}
\hspace{-1.5cm}  
\includegraphics[width=6.5cm,height=4.5cm]{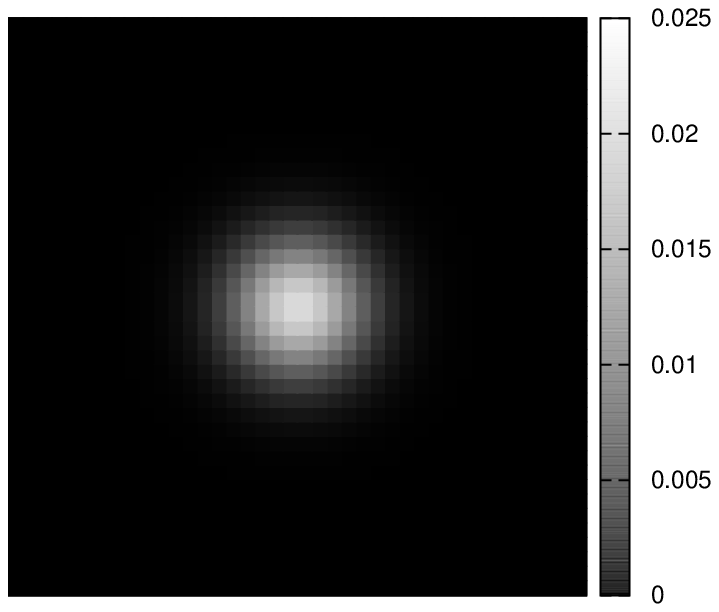}
  
\hspace{1cm}  
  
\vspace{-1.2cm}  
 
\includegraphics[width=6.5cm,height=4.5cm]{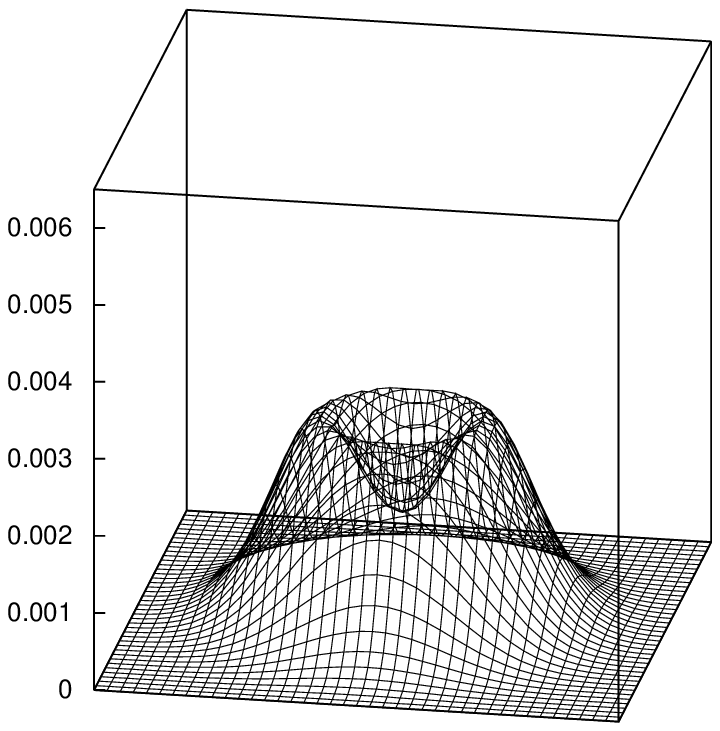}
\hspace{-1.5cm}  
\includegraphics[width=6.5cm,height=4.5cm]{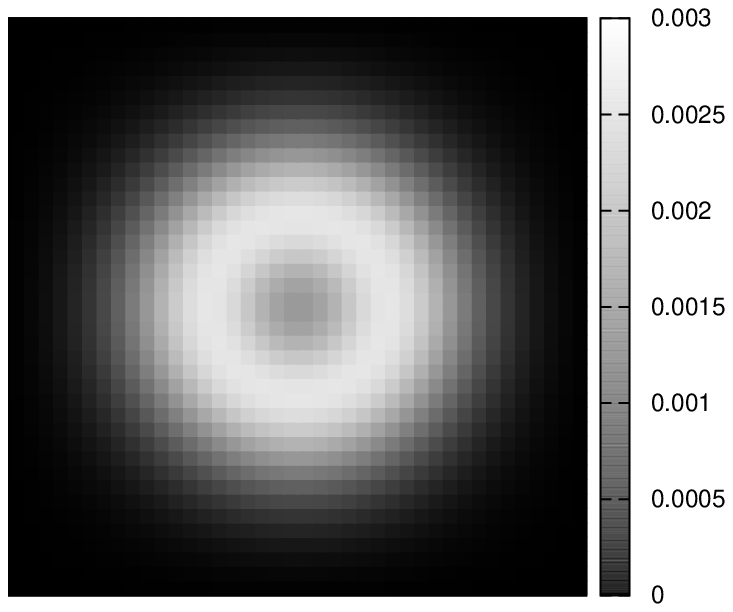}
\caption{The interaction strength is $\kappa=0.5$. }  
\label{fig:seg0.5}  
\end{figure}  
\clearpage
\subsection{$\kappa=10$} 

In the regime of large interaction strength $\kappa$,
 the segregation phenomenon occurs: the supports get more and more disjoint,
see Figure \ref{fig:seg10}.
 
\begin{figure}[h!] 
\includegraphics[width=6.5cm,height=4.5cm]{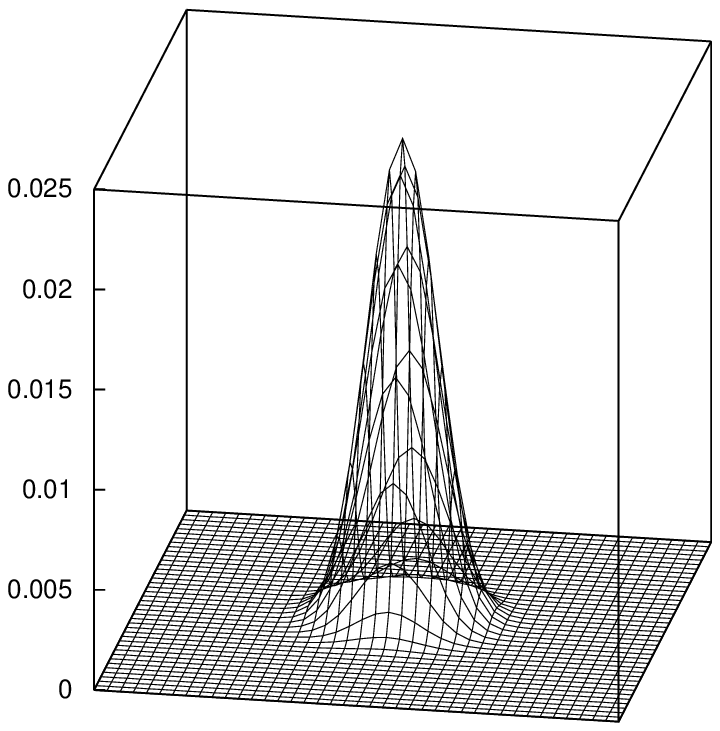}
\hspace{-1.5cm}  
\includegraphics[width=6.5cm,height=4.5cm]{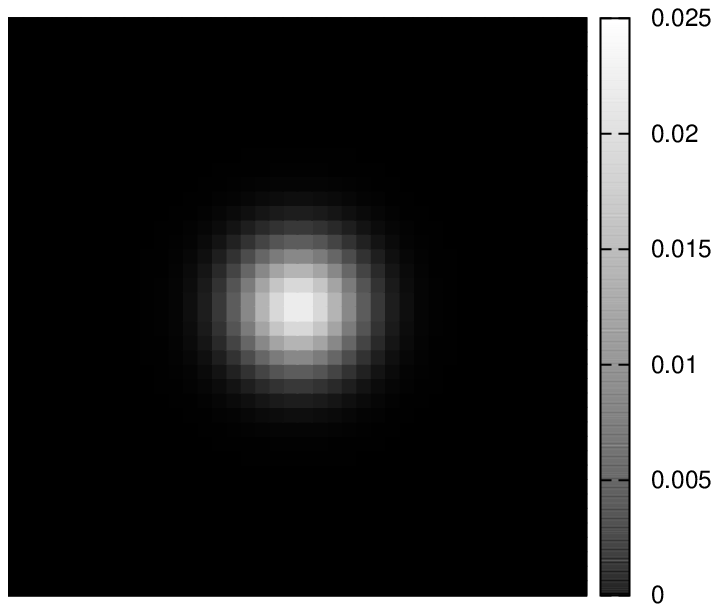}
 
\hspace{1cm} 
 
\vspace{-1.1cm} 
 
\includegraphics[width=6.5cm,height=4.5cm]{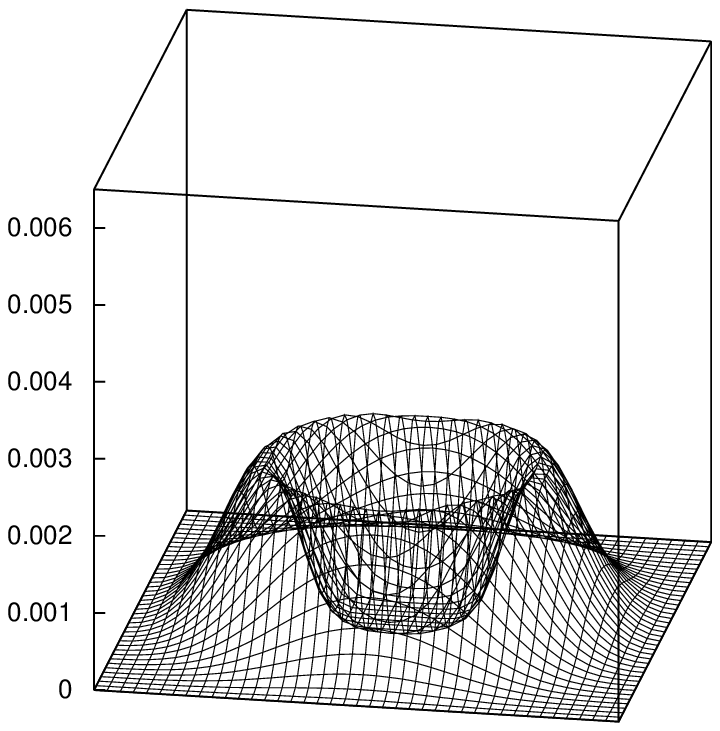}
\hspace{-1.5cm} 
\includegraphics[width=6.5cm,height=4.5cm]{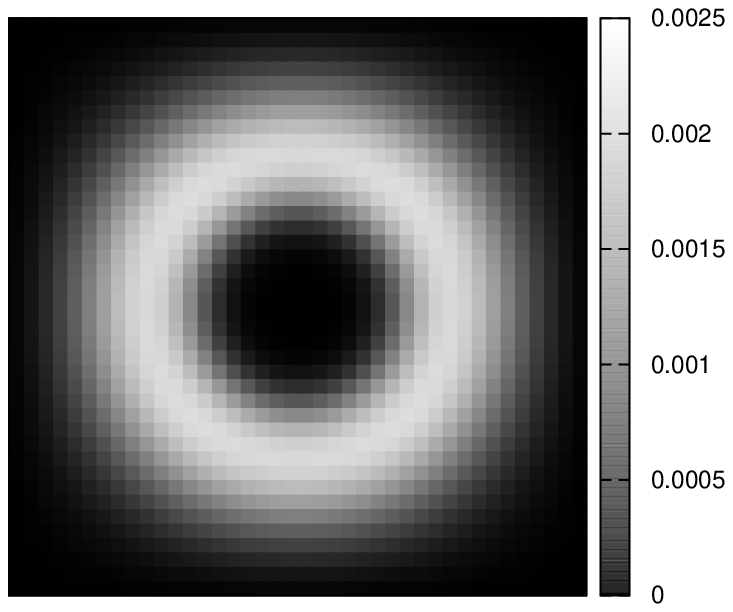}
\caption{The interaction strength is $\kappa=10$.}  
\label{fig:seg10}  
\end{figure}  
 
\brm  
Up to the shape of the support of $\phi_i$, there is no qualitative change in 
the picture if the two harmonic potentials are slightly dislocated with respect 
to each other.
\erm 

\subsection{Coulomb energy}
 
Finally, we monitor the decay of the Coulomb energy 
from formula~\eqref{def:D0}, see Figure \ref{fig:D0}. 
 
\begin{figure}[h!]  
\centering  
\includegraphics[width=6cm,height=5cm]{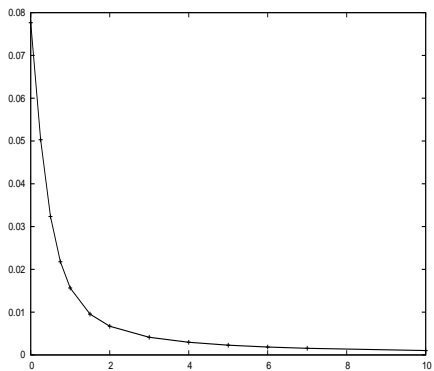}  
\caption{The decay of $\D_0[z_1^\kappa,z_2^\kappa]$ as a function of  
$\kappa$.}  
\label{fig:D0}  
\end{figure}

\clearpage
 
\end{document}